\documentclass[11pt]{amsart}

\usepackage[left=2cm,right=2cm,top=3cm,bottom=3cm]{geometry}

\usepackage{amsmath, amssymb, latexsym, amsthm}
\usepackage{fullpage}
\usepackage{color}
\usepackage{graphicx}
\usepackage{stmaryrd,amsbsy,enumerate}

\def\COMMENT#1{}
\let\COMMENT=\footnote 

\newtheorem{theorem}{Theorem} 
\newtheorem{theorem*}{Theorem} 
\newtheorem{proposition}[theorem]{Proposition} 

\newtheorem{corollary}[theorem]{Corollary}

\theoremstyle{definition}

\newtheorem{question}[theorem]{Question}

\theoremstyle{remark}
\newtheorem{claim}{Claim}

{\end{enumerate}}




\renewcommand{\Pr}{\mathbb{P}}

\newcommand{\eps}{\varepsilon}
\renewcommand{\epsilon}{\varepsilon}
\DeclareMathOperator{\im}{im}

\newcounter{lth}
\setcounter{lth}{1}
\newcounter{2th}
\setcounter{2th}{2}
\newcounter{3th}
\setcounter{3th}{3}
\newcounter{4th}
\setcounter{4th}{4}
\newcounter{5th}
\setcounter{5th}{5}

\title{Monochromatic triangles in three-coloured graphs}
\author{James Cummings$^\fnsymbol{lth}$} \thanks{$^\fnsymbol{lth}$Department of Mathematical Sciences, Carnegie Mellon University, Pittsburgh PA 15213, USA.
E-mail: {\tt jcumming@andrew.cmu.edu} This author's research was partially supported by National Science Foundation
     grant DMS-1101156}
\author{Daniel Kr\'al'$^\fnsymbol{2th}$}\thanks{$^\fnsymbol{2th}$Computer Science Institute, Faculty of Mathematics and Physics, Charles University, Malostransk{\'e} n{\'a}m{\v e}st{\'\i} 25, 118 00 Prague, Czech Republic. E-mail: {\tt \{kral,treglown\}@iuuk.mff.cuni.cz} The work leading to this invention has received funding from the European Research Council under the European Union's Seventh Framework Programme (FP7/2007-2013)/ERC grant agreement no.~259385.}
\author{Florian Pfender$^\fnsymbol{3th}$} \thanks{$^\fnsymbol{3th}$Universit\"at Rostock, Institut f\"ur Mathematik, 18057 Rostock, Germany
and University of Colorado Denver, Mathematical and Statistical Sciences, Denver, CO, 80202, USA.
E-mail: {\tt florian.pfender@uni-rostock.de}}
\author{Konrad Sperfeld$^\fnsymbol{4th}$}\thanks{$^\fnsymbol{4th}$Universit\"at Rostock, Institut f\"ur Mathematik, 18057 Rostock, Germany 
E-mail: {\tt konrad.sperfeld@uni-rostock.de}}
\author{Andrew Treglown$^\fnsymbol{2th}$}
\author{Michael Young$^\fnsymbol{5th}$}\thanks{$^\fnsymbol{5th}$Department of Mathematics, Iowa State University, Ames, IA 50011, USA.
E-mail: {\tt myoung@iastate.edu} This author's research was supported by NSF grant DMS 0946431}

\begin{document}

\begin{abstract}
In 1959, Goodman~\cite{good} determined the minimum number
of monochromatic triangles in a complete graph whose edge set is
$2$-coloured. Goodman~\cite{goodDM} also raised the question of
proving analogous results for complete graphs whose edge sets
are coloured with more than two colours. In this paper,
for $n$ sufficiently large, we determine the minimum number
of monochromatic triangles in a $3$-coloured copy of $K_n$.
Moreover, we characterise
those $3$-coloured copies of $K_n$ that contain the minimum number of 
monochromatic triangles.
\end{abstract}

\maketitle

\section{Introduction}\label{intro}
The \emph{Ramsey number $R_k (G)$} of a graph $G$ is the minimum $n \in \mathbb N$ such that every $k$-colouring of $K_n$
contains a monochromatic copy of $G$.
(In this paper we say a graph $K$ is \emph{$k$-coloured} if we have coloured the \emph{edge set} of $K$ using $k$ colours.
Note that the edge colouring need not be proper.)
A famous theorem of Ramsey~\cite{ramsey}
asserts that $R_k (G)$ exists for all graphs $G$ and all $k \in \mathbb N$.

In light of this, it is also natural to consider the so-called \emph{Ramsey multiplicity} of a graph:
Let $k,n \in \mathbb N$ and let $G$ be a graph. The  \emph{Ramsey multiplicity $M_k (G,n)$} of $G$ is the minimum
number of monochromatic copies of $G$ over all $k$-colourings of $K_n$. 
(Here, we are counting unlabelled copies of $G$ 
in the sense that we count the number of distinct monochromatic subgraphs of $K_n$ that are isomorphic to $G$.)
In the case when $k=2$ we simply write $M(G,n)$.
The following classical result of Goodman~\cite{good} from 1959 gives the precise value of $M(K_3,n)$.
\begin{theorem}[Goodman~\cite{good}]\label{good}
Let $n \in \mathbb N$. Then 
\begin{equation*} 
 M(K_3,n)= \left\{\begin{array}{ll}
n(n-2)(n-4)/24 & \text{if $n$ is even;}\\
n(n-1)(n-5)/24 & \text{if $n \equiv 1\mod 4$;} \\
(n+1)(n-3)(n-4)/24 & \text{if $n \equiv 3\mod 4.$}
\end{array} \right.\end{equation*}
\end{theorem}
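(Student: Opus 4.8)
The plan is to prove the formula by the classical double-counting argument, which reduces the whole problem to an optimisation over degree sequences. Fix a $2$-colouring of $K_n$ by red and blue, and for each vertex $v$ write $r_v$ for its red degree and $b_v=n-1-r_v$ for its blue degree. Call a cherry (path on three vertices) \emph{mixed at its centre} $v$ if its two edges receive different colours; the number of cherries mixed at $v$ is exactly $r_vb_v$. On the other hand a monochromatic triangle contains no mixed cherry, whereas a triangle with a $2$--$1$ colour split contains exactly two (one at each endpoint of the minority-coloured edge). Summing over all centres therefore gives $\sum_v r_vb_v=2t'$, where $t'$ is the number of non-monochromatic triangles, and hence the number of monochromatic triangles equals $\binom{n}{3}-\tfrac12\sum_v r_v(n-1-r_v)$. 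The key point to extract here is that this count depends \emph{only} on the red degree sequence $(r_v)_v$, so $M(K_3,n)$ is obtained by maximising $\sum_v r_v(n-1-r_v)$ over all sequences that are realisable as the degree sequence of a graph on $n$ vertices.

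Next I would bound this sum. The map $r\mapsto r(n-1-r)$ is a concave function of the integer $r$ maximised at $r=(n-1)/2$, so $r_v(n-1-r_v)\le\lfloor(n-1)^2/4\rfloor$ for every $v$, with equality precisely when $r_v\in\{\lfloor(n-1)/2\rfloor,\lceil(n-1)/2\rceil\}$, and any $r_v$ outside this set incurs a loss of at least $1$. If $n$ is even, or if $n\equiv1\pmod4$, there is no further obstruction and summing gives $\sum_v r_vb_v\le n\lfloor(n-1)^2/4\rfloor$; substituting into the identity and simplifying yields $n(n-2)(n-4)/24$ and $n(n-1)(n-5)/24$ respectively. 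If $n\equiv3\pmod4$ then $(n-1)/2$ is odd and $n$ is odd, so taking every $r_v=(n-1)/2$ would make $\sum_v r_v$ odd; since $\sum_v r_v=2|E(\text{red})|$ is even, at least one vertex must deviate, costing an extra $1$, and we get $\sum_v r_vb_v\le n(n-1)^2/4-1$, which after simplification becomes $(n+1)(n-3)(n-4)/24$.

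Finally I would check that each bound is attained, by exhibiting a red graph with the optimal degree sequence: a $\lfloor(n-1)/2\rfloor$-regular graph on $n$ vertices (for instance a suitable circulant on $\mathbb{Z}_n$) when $n$ is even or $n\equiv1\pmod4$, and a graph with $n-1$ vertices of degree $(n-1)/2$ and one of degree $(n-1)/2+1$ when $n\equiv3\pmod4$; that these sequences are graphical is routine, e.g.\ via the Erd\H{o}s--Gallai criterion or by a direct construction. Since the count of monochromatic triangles is a function of the degree sequence alone, any colouring with such a red graph meets the lower bound, so the three displayed values are exactly $M(K_3,n)$.

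The only genuinely delicate point is the residue class $n\equiv3\pmod4$: there one must both recognise the parity obstruction that rules out an exactly $(n-1)/2$-regular red graph and confirm that the best available near-regular degree sequence is still realisable. The remaining ingredients — the cherry-counting identity, the concavity bound, and the arithmetic converting the three estimates into closed form — are all routine.
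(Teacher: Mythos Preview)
The paper does not actually prove Theorem~\ref{good}; it is quoted verbatim from Goodman's 1959 paper as background for the $3$-colour problem, so there is no ``paper's own proof'' to compare against.

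That said, your argument is the classical one and is correct. The cherry-counting identity
\[
\#\{\text{monochromatic triangles}\}=\binom{n}{3}-\tfrac12\sum_v r_v(n-1-r_v)
\]
is derived exactly as you describe, the concavity bound $r(n-1-r)\le\lfloor(n-1)^2/4\rfloor$ is sharp, and your case split is right: for $n$ even one may take an $(n/2)$-regular red graph (e.g.\ $K_{n/2,n/2}$), for $n\equiv 1\pmod 4$ an $(n-1)/2$-regular circulant, and for $n\equiv 3\pmod 4$ the parity of $\sum_v r_v$ forces at least one vertex off $(n-1)/2$, costing exactly $1$ in $\sum_v r_vb_v$, which after the arithmetic gives $(n+1)(n-3)(n-4)/24$. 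The near-regular sequence in that last case (all degrees $(n-1)/2$ except one of $(n\pm1)/2$) is graphical by Erd\H{o}s--Gallai, so the bound is attained. Nothing is missing.
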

A graph $G$ is \emph{$k$-common} if $M_k (G,n)$ asymptotically equals, as $n$ tends to infinity, the expected number
of monochromatic copies of $G$ in a random $k$-colouring of $K_n$. 
Erd\H{o}s~\cite{E}
 conjectured that $K_r$ is $2$-common for every $r \in \mathbb N$. Note that Theorem~\ref{good} implies that this
conjecture is true for $r=3$. However, Thomason~\cite{AT, Th} disproved the conjecture in the case when $r=4$.
Further, Jagger, {\v{S}}{\v{t}}ov{\'{\i}}{\v{c}}ek, and Thomason~\cite{JST} proved that any graph $G$ that contains $K_4$ is not $2$-common. 
Recently, Cummings and Young~\cite{CuYo} proved that graphs $G$ that contain $K_3$ are not $3$-common.
The introductions of~\cite{CuYo} and~\cite{non} give  more detailed overviews of  $k$-common graphs.

The best known general lower bound on $M(K_r,n)$ was proved by Conlon~\cite{conlon}.
Some general bounds on $M_k (K_r,n)$ are given in~\cite{fox}.
See~\cite{burr} for a (somewhat outdated) survey on Ramsey multiplicities.

The problem of obtaining a $3$-coloured analogue of Goodman's theorem also has a long history. In fact, it is not entirely
clear when this problem was first raised.
In 1985, Goodman~\cite{goodDM} simply refers to it as ``an old and difficult problem''.
Prior to this, Giraud~\cite{giraud} proved that, for sufficiently large $n$, $M_3 (K_3, n) >4\binom{n}{3}/115$.
Wallis~\cite{wallis} showed that $M_3 (K_3,17) \leq 5$ and then, together with Sane~\cite{sane}, proved that $M_3 (K_3,17)=5$.
(Greenwood and Gleason~\cite{green} proved that $R_3 (K_3)=17$, therefore, $M_3 (K_3, 16)=0$.)

The focus of this paper is to give the exact value of $M_3(K_3,n)$ for sufficiently large $n$, thereby yielding a $3$-coloured analogue of Goodman's theorem. Moreover, we characterise
those $3$-coloured copies of $K_n$ that contain exactly $M_3(K_3,n)$ monochromatic triangles.

Given $n \in \mathbb N$ we define a special collection of $3$-coloured complete graphs on $n$ vertices, $\mathcal {G}_n$ as follows:
\begin{itemize}
\item Consider the (unique) $2$-coloured copy $K$ of $K_5$ on $[5]$ without a monochromatic triangle. Replace the vertices 
of $K$ with disjoint vertex classes  $V_1,\dots, V_5$ such that $||V_i|-|V_j||\leq 1$ for all $1\leq i,j \leq 5$ and 
$|V_1|+\dots+|V_5|=n$. For all $1\leq i\not = j \leq 5$, add all possible edges between $V_i$ and $V_j$ using the colour of  $ij$ in $K$.
For each $1\leq i \leq 5$, add all possible edges inside $V_i$ in a third colour. Denote the resulting complete $3$-coloured graph
by $G_{ex}(n)$ (see Figure~1).
\item $\mathcal G_n$ consists of $G_{ex}(n)$ together with all graphs obtained from $G_{ex}(n)$ by recolouring a (possibly empty) matching $M_{i,j}$ in $G_{ex}(n)[V_i,V_j]$ with the third colour for all $1\leq i\not = j \leq 5$, such that the recolouring does not introduce
any new monochromatic triangles (see Figure~1).
\end{itemize}

\begin{figure}\label{fig:Gn}
\includegraphics[bb=122 522 500 676,clip]{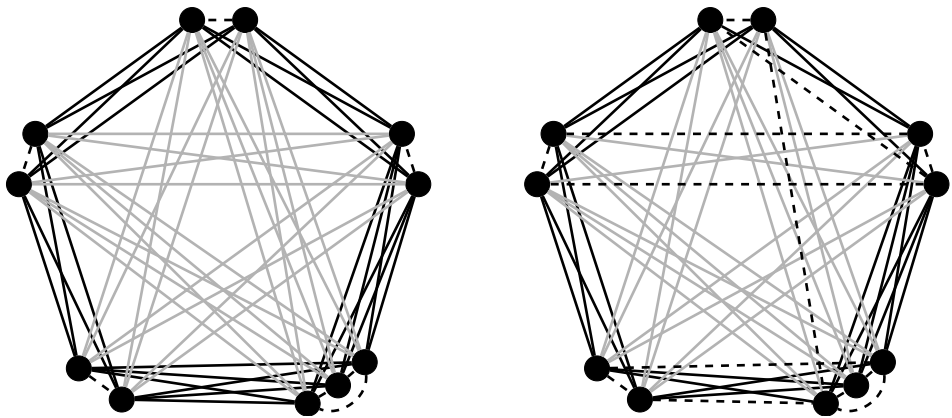}
\caption{$G_{ex}(11)$ and another element of $\mathcal G_{11}$}
\end{figure}

Notice that the graphs in $\mathcal G_n$ only contain monochromatic triangles of one colour. The following is our main result.
\begin{theorem}\label{mainthm}
There exists an $n_0 \in \mathbb N$ such that the following holds.
Suppose $G$ is a complete $3$-coloured graph on $n\geq n_0$ vertices which contains the smallest number of monochromatic triangles amongst
all complete $3$-coloured graphs on $n$ vertices. Then $G$ is a graph from $\mathcal G_n$. 
\end{theorem}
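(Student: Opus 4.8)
The strategy is to pin down the asymptotic extremal value and the unique extremal limit by the flag algebra method, then run a stability argument, and finally a ``cleaning'' phase upgrading the approximate structure to exact membership in $\cG_n$. Write $f(n)=\sum_{i=1}^{5}\binom{n_i}{3}$, where $n_1,\dots,n_5$ are as equal as possible with $\sum_i n_i=n$. The upper bound is a direct check: in any $G\in\cG_n$ the two pentagon colours realise a blow-up of the triangle-free graph $C_5$ and of its (also triangle-free) complement, so there are no monochromatic triangles in those two colours; recolouring a matching $M_{i,j}$ with the third colour destroys no monochromatic triangle and, by the imposed condition, creates none; hence every $G\in\cG_n$ has exactly $f(n)$ monochromatic triangles, all of the third colour and each inside a single class $V_i$. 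In particular $M_3(K_3,n)\le f(n)=\bigl(\tfrac1{25}+o(1)\bigr)\binom n3$.

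The first substantial step is a matching asymptotic lower bound, obtained by flag algebras in the setting of $3$-edge-coloured complete graphs: the monochromatic triangle density is the sum over the three colours of the triangle density in each colour, and one exhibits (numerically, then verified as an exact rational sum-of-squares certificate) a proof that this quantity is at least $\tfrac1{25}$ in every limit. Examining the certificate --- which flags carry coefficient zero, which squares must vanish along an extremal sequence --- identifies the extremal limit object uniquely: the blow-up of the unique triangle-free $2$-colouring of $K_5$ with all five parts of measure $\tfrac15$ and the ``diagonal'' in the third colour. A standard compactness and removal-lemma argument turns this into a stability statement: for every $\eps>0$ there exist $\delta>0$ and $n_1$ such that any $3$-colouring of $K_n$ with $n\ge n_1$ and at most $\bigl(\tfrac1{25}+\delta\bigr)\binom n3$ monochromatic triangles admits a partition $V_1,\dots,V_5$ with $\bigl||V_i|-|V_j|\bigr|\le n/100$ and with all but at most $\eps n^2$ edges coloured exactly as in $G_{ex}(n)$.

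Now let $G$ be an exact minimiser on $n\ge n_0$ vertices, so $G$ has at most $f(n)$ monochromatic triangles; apply stability with $\eps$ tiny in terms of the constants below to obtain a partition $V_1,\dots,V_5$ and a set $B$ of edges miscoloured relative to the $G_{ex}$-pattern, with $|B|\le\eps n^2$. Call a vertex \emph{good} if it meets at most $\eps^{1/2}n$ edges of $B$; all but $O(\eps^{1/2}n)$ vertices are good. A short count shows that a miscoloured edge joining two good vertices lies in $\Omega(n)$ monochromatic triangles \emph{unless} it joins two distinct classes and has the third colour. That the latter edges are cheap while the former are expensive rests on $C_5$ and its complement being triangle-free --- so recolouring an expensive \emph{cross-class} edge to the colour dictated by the pattern creates no new monochromatic triangle while destroying $\Omega(n)$ --- together with the classes being nearly balanced, which lets one treat an expensive \emph{within-class} edge (recolouring it to the third colour destroys the triangles it forms with either adjacent pentagon class and creates only those it forms inside its own class, a net loss); either way minimality is contradicted, so every miscoloured edge between good vertices is a third-colour edge across two classes. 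Next one removes the non-good vertices: migrating a non-good vertex $v$ to the class minimising the number of monochromatic triangles through $v$, and recolouring its edges to fit the pattern, strictly decreases the total, because the now-clean structure on the good vertices forces $v$'s current triangle count to exceed its post-migration value by $\Omega(\eps^{1/2}n^2)$ unless $v$ was already essentially conforming. Hence $G$ has no non-good vertices, so $G$ is $G_{ex}$ on the partition $V_1,\dots,V_5$ together with a third-colour edge set across each pair of classes, and $\#\triangle(G)=\sum_i\binom{|V_i|}{3}+(\text{monochromatic triangles involving at least one recoloured edge})$. Since this equals $M_3(K_3,n)\le f(n)\le\sum_i\binom{|V_i|}{3}$ (the last inequality by convexity of $\binom\cdot3$), the classes are as equal as possible and there are no such extra triangles; in particular the recoloured edges across each pair of classes form a matching $M_{i,j}$ and these matchings create no monochromatic triangle, that is, $G\in\cG_n$ and $M_3(K_3,n)=f(n)$.

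The construction (the upper bound and the equality within $\cG_n$) is routine, and the flag algebra certificate, though a large finite computation, is mechanical once set up. The real difficulty is the cleaning phase: one must control how the single-edge recolouring moves interact when many miscoloured edges are present at once, so that ``each expensive edge costs $\Omega(n)$'' survives passage to the recoloured graph; and --- the hardest point --- one must eliminate the $o(n)$ atypical vertices, which a priori might lie in rather few monochromatic triangles yet still spoil the structure, so the local-optimality comparison must be carried out with explicit constants (or, alternatively, bootstrapped by an induction on $n$ using the exact increment $M_3(K_3,n)-M_3(K_3,n-1)$). A final bookkeeping nuisance is to track the floor/ceiling arithmetic so that the extremal value and the family $\cG_n$ come out correctly for every large $n$ rather than merely asymptotically.
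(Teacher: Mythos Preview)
Your high-level architecture (flag algebra bound, stability, cleaning) matches the paper, but the way you obtain stability is genuinely different from what the paper does, and your proposal underestimates the work hidden there.

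The paper does \emph{not} argue uniqueness of the extremal limit and then invoke compactness plus a removal lemma. Instead, from the flag algebra certificate it extracts an explicit finite family $\mathcal{H}$ of $4$-vertex coloured configurations (roughly: a monochromatic triangle together with a fourth vertex whose three edges fall into certain colour patterns) and shows that any near-extremal $G$ has $p(\mathcal{H},G)<\eps$. Then it picks a \emph{random} subgraph $G'$ on a fixed number $n_1$ of vertices; with high probability $G'$ contains \emph{no} copy of any $H\in\mathcal{H}$ at all. The heart of the paper is a direct combinatorial analysis of such $\mathcal{H}$-free colourings: one studies the maximal monochromatic cliques of $G'$, proves they are all the same colour, exactly five in number, nearly balanced, and arranged as a $2$-coloured $C_5$ between them (Claims 5--9 and the ``clique graph'' construction). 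This is structure obtained by forbidden-subgraph reasoning, not by removal. The structure is then transferred from random $n_1$-subsets back to $G$ by another averaging argument (Claim 10), giving the partition $V_1,\dots,V_5$.

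Your route---deduce uniqueness of the limit from the certificate, then quote compactness and a coloured removal lemma---is plausible and has been used elsewhere, but calling it ``standard'' papers over exactly the step that the authors found worth a bespoke argument. In particular, you would still need to show that the zero-eigenvector constraints from the SDP pin down the limit (the paper reports that the obvious constraints were \emph{not} enough and that additional relations had to be guessed from numerics), and a removal lemma for $3$-edge-coloured complete graphs is a nontrivial import. So this is an alternative approach, not a gap, but it is not lighter than the paper's.

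Your cleaning phase is close in spirit to the paper's Claims 12--14 (the $V_i$/$V^*$ analysis and the single-edge recolouring in Claim 12 are essentially your ``expensive edge'' argument). One point to tighten: your sentence ``recolouring an expensive cross-class edge to the colour dictated by the pattern creates no new monochromatic triangle'' is only true up to $O(\eps^{1/2}n)$ error when both endpoints are good, not exactly; you use this correctly later but the sentence as written overstates it.
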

\begin{corollary} There exists an $n_0 \in \mathbb N$ such that the following holds. Suppose $n\geq n_0$ and write $n=5m+r$ where $m,r \in 
\mathbb N$ such that $0\leq r\leq 4$.
Then
$$M_3 (K_3, n)=r\binom{m+1}{3}+(5-r)\binom{m}{3}.$$
\end{corollary}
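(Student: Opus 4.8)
The plan is to derive the corollary directly from Theorem~\ref{mainthm}. Since that theorem identifies every extremal $3$-coloured $K_n$ (for $n \ge n_0$) as a member of $\mathcal G_n$, and since every graph in $\mathcal G_n$ is obtained from $G_{ex}(n)$ by recolouring matchings that introduce no new monochromatic triangles, all members of $\mathcal G_n$ have exactly the same number of monochromatic triangles, namely the number in $G_{ex}(n)$. So it suffices to count monochromatic triangles in $G_{ex}(n)$, and to check this count equals $r\binom{m+1}{3}+(5-r)\binom{m}{3}$ where $n=5m+r$, $0\le r\le 4$.

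First I would recall the structure of $G_{ex}(n)$: the vertex set is partitioned into $V_1,\dots,V_5$ with sizes as equal as possible (so $r$ of them have size $m+1$ and $5-r$ have size $m$), edges between distinct classes $V_i,V_j$ are coloured by the colour of $ij$ in the unique monochromatic-triangle-free $2$-colouring $K$ of $K_5$, and all edges inside each $V_i$ receive the third colour, say colour $3$. Next I would argue that the only monochromatic triangles are in colour $3$ and lie entirely inside a single class $V_i$. Indeed, a triangle in colour $1$ or colour $2$ must have all three edges going between classes (since intra-class edges are colour $3$), so its vertices lie in three distinct classes $V_a,V_b,V_c$, and this would give a monochromatic triangle $abc$ in $K$ — impossible. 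A triangle in colour $3$ using an inter-class edge is likewise impossible since inter-class edges are never colour $3$. Hence every monochromatic triangle is a colour-$3$ triangle inside some $V_i$, and conversely every triple of vertices within a single $V_i$ forms such a triangle.

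Therefore the number of monochromatic triangles in $G_{ex}(n)$ is $\sum_{i=1}^{5}\binom{|V_i|}{3}$. With $r$ classes of size $m+1$ and $5-r$ of size $m$, this is exactly $r\binom{m+1}{3}+(5-r)\binom{m}{3}$. Combining this with Theorem~\ref{mainthm} (which for $n\ge n_0$ forces any extremal graph to lie in $\mathcal G_n$, all of whose members attain this value) gives $M_3(K_3,n)=r\binom{m+1}{3}+(5-r)\binom{m}{3}$, completing the proof.

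I do not expect any real obstacle here: the corollary is a routine consequence of the main theorem, and the only thing to be careful about is verifying that the matching-recolourings used to build $\mathcal G_n$ genuinely preserve the triangle count — but this is immediate from their defining property that they introduce no new monochromatic triangles and (being recolourings \emph{into} colour $3$ of edges that were colour $1$ or $2$) destroy no colour-$1$ or colour-$2$ triangle because there were none to begin with, and create no colour-$3$ triangle by hypothesis. The mild bookkeeping of matching $n=5m+r$ to the $\binom{m+1}{3}$ and $\binom{m}{3}$ terms is the only computation, and it is trivial.
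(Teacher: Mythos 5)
Your proof is correct and is exactly the routine derivation the paper intends (the corollary is stated without proof). The two key points — that every graph in $\mathcal G_n$ has the same number of monochromatic triangles as $G_{ex}(n)$, because the matching recolourings neither create new triangles (by definition of $\mathcal G_n$) nor destroy any (the only green/blue triangles would correspond to a monochromatic triangle in the two-coloured $K_5$, which has none), and that the triangles in $G_{ex}(n)$ are precisely the $\sum_i\binom{|V_i|}{3}=r\binom{m+1}{3}+(5-r)\binom{m}{3}$ intra-class ones — are both verified correctly, so combined with Theorem~\ref{mainthm} this gives the stated value of $M_3(K_3,n)$.
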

The proof of Theorem~\ref{mainthm} uses Razborov's method of flag algebras~\cite{FA} together with a probabilistic argument.

Goodman~\cite{goodDM} also raised the question of establishing $k$-coloured analogues of Theorem~\ref{good} for $k \geq 4$.
Let $k \geq 3$ and $n \in \mathbb N$. Fox~\cite{fox} gave an upper bound on $M_{k} (K_3,n)$ by considering the following graphs:
Set $m:=R_{k-1} (K_3)-1$. Consider a $(k-1)$-coloured copy $K$ of $K_m$ on $[m]$ without a monochromatic triangle. Replace the vertices 
of $K$ with disjoint vertex classes  $V_1,\dots, V_m$ such that $||V_i|-|V_j||\leq 1$ for all $1\leq i,j \leq m$ and 
$|V_1|+\dots+|V_m|=n$. For all $1\leq i\not = j \leq m$, add all possible edges between $V_i$ and $V_j$ using the colour of  $ij$ in $K$.
For each $1\leq i \leq m$, add all possible edges to $V_i$ using a $k$th colour. Denote the resulting complete $k$-coloured graph
by $G_{ex}(n,k)$. (Thus, $G_{ex}(n)=G_{ex}(n,3)$.)
\begin{question}Let $k\geq 4$ and $n\in \mathbb N$ be sufficiently large. 
Is $M_k (K_3,n)$ equal to the number of monochromatic triangles in $G_{ex} (n,k)$?
\end{question}

\section{Notation}

We will make the convention that the set of colours used in a $k$-colouring of the edges of a graph is
$[k]$. In the case of a $3$-colouring we will generally refer to the colours $1$, $2$ and $3$ as ``red'', ``blue'' and ``green''.  
When  $H$ and $H'$ are two $k$-coloured graphs, an {\em isomorphism} between them is a function
$f: V(H)  \rightarrow V(H')$ which is a graph isomorphism and respects the colouring. 
Two $k$-coloured graphs  $H$ and $H'$ are {\em isomorphic} ($H \cong H'$) if and only if there is an isomorphism between them. 

Given $r \in \mathbb N$, we denote the complete graph on $r$ vertices by $K_r$ and define  $R(r,r):=R_2 (K_r)$.
 Given $k$ and $c \in [k]$, we define $K^r_c$ to be the $k$-coloured complete graph in which every edge of $K_r$
  is given the colour $c$. We define  ${\mathcal K}^r$  to be $\{ K^r_c : c \in [k] \}$, that is to say the set
  of  monochromatic $K_r$'s. 
 Suppose $G$ is a $k$-coloured graph and let $v \in V(G)$ and $i \in [k]$. Then we will use  $N_i(v)$ to denote
the set of vertices in $G$ that receive an edge of colour $i$ from $v$.
 
For a graph $G$ and a vertex set $V \subseteq V(G)$, we denote by $G[V]$ the subgraph of $G$ induced by $V$.
 Given $v_1, \dots , v_m \in V(G)$ we write $G[v_1, \dots , v_m]$ for  $G[\{v_1, \dots , v_m\}]$,
 and for disjoint subsets $V$ and $W$ of $V(G)$ we denote by $G[V,W]$ the bipartite graph with vertex classes $V$ and $W$ whose
edge set consists of those edges between $V$ and $W$ in $G$.
 When $G$ is a $k$-coloured graph, we view $G[V]$ as a $k$-coloured graph with the edge colouring inherited
 from $G$, and do likewise for $G[v_1, \dots , v_m]$ and for $G[V,W]$. 

Throughout the paper, we write, for example, $0<\nu \ll \tau \ll \eta$ to mean that we can choose the constants
$\nu, \tau, \eta$ from right to left. More
precisely, there are increasing functions $f$ and $g$ such that, given
$\eta$, whenever we choose some $\tau \leq f(\eta)$ and $\nu \leq g(\tau)$, all
calculations needed in our proof are valid. 
Hierarchies with more constants are defined in the obvious way.
Finally, the set of all $k$-subsets of a set $A$ is denoted by $[A]^k$.

In the proof of Theorem~\ref{mainthm} we will omit floors and ceilings whenever this does not affect the
argument.

\section{Graph densities}

   From this point on we are exclusively concerned with $3$-colourings,  mostly colourings of complete
   graphs. Suppose  $H$ and $G$ are $3$-coloured complete graphs where $|H| \leq |G|$. 
   Let $d(H,G)$ denote the number of sets $V \in [V(G)]^{\vert H \vert}$ such that
   $G[V] \cong H$, and  define the \emph{density of $H$ in $G$} as
\[
   p(H, G) := \frac{d(H,G)}{\binom{|G|}{|H|}}.
\]
   This quantity has a natural probabilistic interpretation, namely it is the probability that
   if we choose a set ${\mathbf V} \in [V(G)]^{\vert H \vert}$ uniformly at random then
   ${\mathbf V}$ induces an isomorphic copy of $H$. 

   When $\mathcal H$ is a family of $3$-coloured complete graphs $H$ of some fixed size $k$ with  $k \leq |G|$,
   we define 
\[
   p(\mathcal H, G) := \sum_{H \in {\mathcal H}} p(H, G),
\]
  that is to say the probability that a random ${\mathbf V} \in [V(G)]^k$ induces a coloured graph isomorphic
  to an element of $\mathcal H$. In the sequel we generally write ``$H'$ is an $\mathcal H$''
  as an abbreviation for ``$H'$ is isomorphic to some $H \in {\mathcal H}$'',
 ``$G$ contains an $\mathcal H$'' as an abbreviation for ``$G$ contains an induced isomorphic copy of an element of $\mathcal H$'', and 
 ``an $\mathcal H$ in $G$'' for ``an induced copy of some element of $\mathcal H$ in $G$''.

   For $n \ge \vert H \vert$ we let
 $p^{\rm min}(H, n)$ be the minimum value of
 $p(H, G)$ over all $3$-coloured complete graphs $G$ on $n$ vertices. When
  $\mathcal H$ is a family of $3$-coloured complete graphs $H$ of some fixed size $k \le n$, we
 let  $p^{\rm min}({\mathcal H}, n)$ be the minimum value of $p({\mathcal H}, G)$
 over all $3$-coloured complete graphs $G$ on $n$ vertices.

  We now define a certain class $\mathcal H$ of ``bad'' $3$-coloured complete graphs on $4$ vertices. As motivation, we note
  that we are defining a set of $3$-coloured graphs $H$ such that $\max_{G \in {\mathcal G}_n} p(H, G) \rightarrow 0$ 
  with increasing $n$.

  Let ${\mathcal H}(i,j,k)$ be the class of $3$-coloured complete graphs on $4$ vertices 
  with  a monochromatic triangle, $i$ extra edges of that same colour, and $j$ and $k$ edges of the other colours,
  respectively (with $i+j+k=3$, $j\ge k$).
  Define $\mathcal H:=  {\mathcal H}(2,1,0) \cup {\mathcal  H}(1,1,1) \cup {\mathcal  H}(0,2,1)$.

\begin{figure}\label{fig:H}
\includegraphics[bb=132 582 475 676,clip]{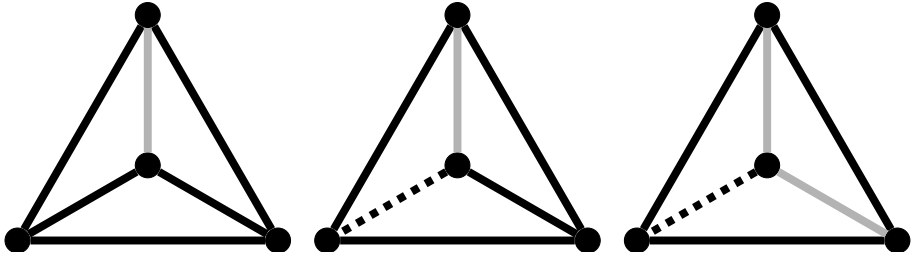} 
\caption{Representative elements of ${\mathcal H}(2,1,0)$, ${\mathcal H}(1,1,1)$ and ${\mathcal H}(0,2,1)$.}
\end{figure}

  The following result about graph densities will be used in the proof of Theorem \ref{mainthm}.
  It provides an (asymptotically) optimal lower bound on the density of monochromatic triangles,
  and also asserts that copies of colourings from the class $\mathcal H$ are rare in any
  colouring that comes close enough to achieving this bound. The proof is given in Section \ref{flags}.

\begin{proposition} \label{prop:flag}  

For all $\epsilon > 0$ there is $n_0$ such that for all $3$-coloured complete graphs $G$ on at least  $n_0$ vertices:

\begin{enumerate}

\item  \label{densityclaim}  $p({\mathcal K}^3, G) \ge 0.04 - \epsilon$. 

\item  \label{forbidclaim}  If $ p({\mathcal K}^3, G) \le 0.04$, then $p({\mathcal H}, G) < \epsilon$.
 
\end{enumerate} 

\end{proposition}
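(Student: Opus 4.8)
The plan is to derive both parts of Proposition~\ref{prop:flag} from a single semidefinite computation carried out via Razborov's flag algebra framework applied to $3$-coloured complete graphs. First I would fix the ``type'' and ``flag'' setup: the vertices of our combinatorial objects are the $3$-coloured complete graphs, and the relevant small templates are all $3$-coloured complete graphs on at most $5$ vertices (the bound $5$ is natural here because $R_2(K_3)=6$ and the extremal construction is built from $K_5$). Working in the flag algebra $\mathcal{A}^0$ of this theory, the density $p(\mathcal{K}^3, G)$ and $p(\mathcal{H}, G)$ are linear combinations of subgraph densities, hence elements of the algebra, and any true asymptotic inequality among such densities is, by Razborov's theory, provable by a finite positive-semidefinite certificate: one exhibits types $\sigma$, lists of $\sigma$-flags, and positive semidefinite matrices $Q_\sigma$ such that the ``averaging'' (Cauchy--Schwarz) contributions $\sum_\sigma \llbracket \cdot \rrbracket_\sigma$ together with a nonnegative combination of the defining nonnegativity relations sum to the claimed inequality plus a manifestly nonnegative remainder. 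Concretely, for part~\eqref{densityclaim} the target is the inequality $p(\mathcal{K}^3, \cdot) \ge 1/25$ in $\mathcal{A}^0$ (note $0.04 = 1/25$ is exactly the triangle density of the blow-up of the $2$-coloured $K_5$ together with the third colour inside classes), and the certificate is found by solving the associated SDP numerically and then rounding to exact rationals.

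For part~\eqref{forbidclaim} I would use the standard ``stability from flag algebras'' trick: the SDP certificate for part~\eqref{densityclaim} is tight on exactly the blow-up construction $G_{ex}(n)$ (and its limit), so the slack in the certificate is a sum of squares of $\sigma$-flag expressions each of which must vanish in the limit of any sequence of near-extremal colourings. Examining which flag densities are forced to $0$, one reads off that any $4$-vertex configuration in $\mathcal{H}$ — which by construction cannot appear inside any member of $\mathcal{G}_n$ — occurs with density pushed to $0$ as $p(\mathcal{K}^3, G) \to 1/25$. More precisely, I would prove the contrapositive in quantitative form: show there is a constant $\delta > 0$ and a flag-algebra identity expressing $p(\mathcal{K}^3, \cdot) - 1/25 \ge \delta\, p(\mathcal{H}, \cdot) - (\text{sum of squares})$, so that $p(\mathcal{K}^3, G) \le 0.04$ forces $p(\mathcal{H},G) \le \delta^{-1}(\text{small})$, and then choose $n_0$ large enough (depending on $\epsilon$) that the $o(1)$ error terms coming from passing between finite graphs and the limit object are below $\epsilon$. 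Strictly, to get the clean hypothesis ``$p(\mathcal{K}^3,G) \le 0.04$'' rather than ``$\le 0.04 + \epsilon$'' one combines this with part~\eqref{densityclaim}: the two together pin $p(\mathcal{K}^3,G)$ into an interval of length $\epsilon$ around $1/25$, which is all the stability argument needs.

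The main obstacle, as usual with flag algebra proofs, is producing a certificate that is simultaneously (a) valid, i.e. the rounded rational SDP matrices are genuinely positive semidefinite and the residual sum of squares has nonnegative coefficients, and (b) strong enough that the tightness analysis in part~\eqref{forbidclaim} isolates \emph{exactly} the family $\mathcal{H}$ and not a larger set of $4$-vertex colourings. Getting (b) typically requires either a carefully chosen set of types (so that the zero-set of the square terms is as small as possible) or an auxiliary argument showing that any near-extremal colouring is close to a blow-up of $K_5$ and then checking the $\mathcal{H}$-freeness directly on such blow-ups. I would therefore organise Section~\ref{flags} around: (i) specifying the types and flags and the matrices $Q_\sigma$ (tables, possibly machine-verified); (ii) the verification lemma that the certificate proves $p(\mathcal{K}^3, G) \ge 1/25 - o(1)$; and (iii) the stability lemma extracting $p(\mathcal{H}, G) = o(1)$ from near-tightness, with the explicit $\epsilon$-to-$n_0$ bookkeeping at the end.
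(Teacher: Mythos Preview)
Your proposal is correct and matches the paper's approach closely. The paper carries out exactly the SDP flag-algebra certificate you describe, using ten types of size~$3$, all $27$ flags on $4$ vertices for each type, and $27\times 27$ rational PSD matrices, with everything expanded over the $792$ three-coloured $K_5$'s; for part~\eqref{forbidclaim} it uses precisely the slack mechanism you outline, verifying that every $M_k \in \mathcal{M}_5$ containing an induced copy of some $H \in \mathcal{H}$ receives a strictly positive coefficient $\lambda_k$ in the residual $\sum_k \lambda_k M_k$, which is the concrete realisation of your inequality $p(\mathcal{K}^3,\cdot) - 1/25 \ge \delta\, p(\mathcal{H},\cdot) - (\text{squares})$.
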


\section{Flag algebras} \label{flags}

   In this section we use the method of {\em flag algebras} due to Razborov \cite{FA} to prove
   Proposition \ref{prop:flag}.  The flag machinery described in subsections \ref{background} and \ref{averaging} is due
   to Razborov, as is the idea of using semidefinite programming for search of valid inequalities
   using this framework.

\subsection{Some background} \label{background} 

   We start by  describing how the main concepts of the general theory of flag algebras  look in the case  
    of $3$-coloured complete graphs. Let ${\mathcal M}_l$ be the set of isomorphism classes of $3$-coloured complete graphs
   on $l$ vertices. It is helpful  to know $\vert {\mathcal M}_l \vert$ for small values of $l$;
   computing this value is a classical enumeration problem \cite{oeis}, in particular $\vert {\mathcal M}_l \vert =  1, 1, 3, 10, 66, 792$
   for $l = 0,1,2,3,4,5$.  

   A {\em type} $\sigma$ is a $3$-coloured complete graph whose underlying set is of the form $[k] = \{ 1, 2, \ldots, k \}$ for some
   $k$, where we write $\vert \sigma \vert = k$. A {\em $\sigma$-flag} is a $3$-coloured complete graph which contains a labelled
   copy of $\sigma$, or more formally a pair $(M, \theta)$ where $M$ is a $3$-coloured complete graph and
   $\theta$ is an injective map from $[k]$ to $V(M)$ 
   that respects the edge-colouring of $\sigma$.    Two $\sigma$-flags are {\em isomorphic} if there is an isomorphism
   that respects the labelling. More formally, $f$ is a {\em flag isomorphism} from  $(M_1, \theta_1)$ to $(M_2, \theta_2)$
   if $f: V(M_1) \rightarrow V(M_2)$ is an isomorphism of coloured graphs and $\sigma \circ \theta_1 = \theta_2$.

   We denote by ${\mathcal F}^\sigma_l$ the set of isomorphism classes of $\sigma$-flags with $l$ vertices. Note that
   if $0$ is the empty type then ${\mathcal F}^0_l = {\mathcal M}_l$. The flags of most interest to us are the elements
   of ${\mathcal F}^\sigma_4$ for various $\sigma$ with $\vert \sigma \vert = 3$; it is easy to see that
   if $\vert \sigma \vert = 3$ then  $\vert {\mathcal F}^\sigma_4 \vert = 27$.

   The notion of graph density described in the preceding section extends to $\sigma$-flags in a straightforward way. 
   Given $\sigma$-flags $F \in {\mathcal F}^\sigma_l$ and $G \in {\mathcal F}^\sigma_m$ for $m \ge l$, we define $p(F, G)$ to be the
   density of isomorphic copies of $F$ in $G$.  More formally let $G = (M, \theta)$,  choose uniformly at random a set
   ${\mathbf V} \in [V(M)]^l$ such that ${\mathbf V}$ contains $\im(\theta)$, and define $p(F, G)$ to be the probability
   that $(M[{\mathbf V}], \theta)$ is isomorphic (as a $\sigma$-flag) to $F$. By convention we will set $p(F, G) = 0$
   in case $m < l$.

   It is routine to see that if $l \le m \le n$, $F \in {\mathcal F}^\sigma_l$ and $H \in {\mathcal F}^\sigma_n$ 
then
\begin{equation} \label{chainrule}
  p(F, H) = \sum_{G \in {\mathcal F}^\sigma_m} p(F, G) p(G, H).
\end{equation} 
   This {\em chain rule} plays a central role in the theory.   
        
   More generally,  given flags $F_i  \in {\mathcal F}^\sigma_{l_i}$ for $1 \le i \le n$ and $G = (M, \theta)  \in {\mathcal F}^\sigma_m$ 
   where $m \ge \sum_i l_i - (n-1) \vert \sigma \vert$, we define a ``joint density'' $p(F_1, \ldots, F_n; G)$. 
   This is the probability that if we choose an $n$-tuple $({\mathbf V}_1, \ldots, {\mathbf V}_n)$ of subsets of $V(M)$ uniformly at random, subject
   to the conditions ${\mathbf V}_i \in [V(M)]^{l_i}$ and ${\mathbf V}_i \cap {\mathbf V}_j = \im(\theta)$ for $i \neq j$,
   then $(M[{\mathbf V}_i], \theta)$ is isomorphic to $F_i$
   for all $i$. 

   A sequence $(G_n)$ of $\sigma$-flags is said to be {\em increasing} if the number of vertices in $G_n$ tends to infinity,
   and {\em convergent} if the sequence of densities $(p(F, G_n))$  converges for every $\sigma$-flag $F$. A routine argument along the
   lines of the Bolzano-Weierstrass theorem shows that every increasing sequence has a convergent subsequence. If $(G_n)$ is convergent
   then we can define a map $\Phi$ on $\sigma$-flags by setting $\Phi(F) =\lim_{n \rightarrow \infty} p(F, G_n)$. We note that 
   when $F \in {\mathcal F}^\sigma_l$ and $l \le m$,  
   it follows readily from equation (\ref{chainrule}) that 
\begin{equation} \label{Phieqn1}
  \Phi(F) = \sum_{G \in {\mathcal F}^\sigma_m} p(F, G) \Phi(G).
\end{equation} 

    Equation (\ref{Phieqn1}) suggests that in some sense ``$F = \sum_{G \in {\mathcal F}^\sigma_m} p(F, G) G$'', and the
    definition of the {\em flag algebra} ${\mathcal A}^\sigma$ makes this precise. We define 
    ${\mathcal F}^\sigma_\infty = \bigcup_n {\mathcal F}^\sigma_n$, let ${\mathbb R}  {\mathcal F}^\sigma_\infty$ be the real
    vector space consisting of finite formal linear combinations of elements of  ${\mathcal F}^\sigma_\infty$, and then
    define ${\mathcal A}^\sigma$ to be the quotient of ${\mathbb R}  {\mathcal F}^\sigma_\infty$ by the subspace ${\mathcal K}^\sigma$ generated by
    all elements of the form  $F - \sum_{G \in {\mathcal F}^\sigma_m} p(F, G) G$. We will not be distinguishing
    between a flag $F$, its isomorphism class $[F] \in {\mathcal F}^\sigma$, the element $1 [F] \in {\mathbb R}  {\mathcal F}^\sigma_\infty$ 
    and the element $1 [F] +{\mathcal K}^\sigma \in {\mathcal A}^\sigma$.   

    If $\Phi$ is the map on $\sigma$-flags induced by a convergent sequence as above, then $\Phi$ extends by linearity to a 
    map $\Phi: {\mathbb R}  {\mathcal F}^\sigma_\infty \rightarrow {\mathbb R}$. The linear map $\Phi$ vanishes on ${\mathcal K}^\sigma$
    by equation (\ref{Phieqn1}), and hence induces a linear map $\Phi: {\mathcal A}^\sigma \rightarrow {\mathbb R}$.  
    So far ${\mathcal A}^\sigma$ is only a real vector space; we make it into an $\mathbb R$-algebra by defining
    a product as follows. Let $F_1 \in {\mathcal F}^\sigma_{l_1}$, $F_2 \in {\mathcal F}^\sigma_{l_2}$, let
    $m \ge l_1 + l_2 - \vert \sigma \vert$, and define
\[
    F_1 \cdot F_2 := \sum_{G \in {\mathcal F}^\sigma_m} p(F_1, F_2; G) G.
\]
    This can be shown \cite[Lemma 2.4]{FA} to give a well-defined multiplication operation on ${\mathcal A}^\sigma$ independent of the choice
    of $m$, and it can also be shown \cite[Theorem 3.3 part a]{FA} that if $\Phi: {\mathcal A}^\sigma \rightarrow {\mathbb R}$ is induced by a convergent
    sequence then $\Phi(F_1 \cdot F_2) = \Phi(F_1) \Phi(F_2)$, that is $\Phi$ is an algebra homomorphism from ${\mathcal A}^\sigma$ to ${\mathbb R}$.
    The converse is also true \cite[Theorem 3.3 part b]{FA}: if $\Phi$ is such a homomorphism and $\Phi(F) \ge 0$ for all $\sigma$-flags $F$, then
    there exists an increasing and convergent sequence $(G_n)$ such that $\Phi(F) = \lim_n p(F, G_n)$ for all flags $F$.

    Following Razborov we let $\mathrm{Hom}^+({\mathcal A}^\sigma, {\mathbb R})$ be the set of homomorphisms induced by
    convergent sequences of $\sigma$-flags, and define a preordering on ${\mathcal A}^\sigma$ by stipulating that
    $A \le B$ if and only if $\Phi(A) \le \Phi(B)$ for all $\Phi \in \mathrm{Hom}^+({\mathcal A}^\sigma, {\mathbb R})$.

\subsection{Averaging and lower bounds} \label{averaging}

    The algebra ${\mathcal A}^\sigma$ has an identity element $1_\sigma = (\sigma, id_\sigma)$, and it is easy to
    see that $\Phi(1^\sigma) = 1$ for all $\Phi \in \mathrm{Hom}^+({\mathcal A}^\sigma, {\mathbb R})$. Accordingly we will
    identify the real number $r$ and the element $r 1_\sigma$. With this convention, the task of finding
    asymptotic lower bounds for quantities like the density of monochromatic triangles amounts
    to proving inequalities of the form ``$F \ge r$ in ${\mathcal A}^0$'' for some sum of $0$-flags $F$ and real number $r$. 
    We will prove that
\[
    K^3_{red} + K^3_{blue} + K^3_{green} \ge 0.04.
\]

    Given a $\sigma$-flag $F  = (M, \theta)$, we let $F \vert_0 = M$. We define
    $\llbracket F \rrbracket_\sigma = q_\sigma(F) M$, where $q_\sigma(F)$ is the
    probability that a random injective function ${\boldsymbol \theta}$ from $[\vert \sigma \vert]$ 
    to $V(M)$ gives a $\sigma$-flag $(M, {\boldsymbol \theta})$ and this flag  is isomorphic to $F$.  
    This map on $\sigma$-flags  extends to a linear map from ${\mathcal A}^\sigma$ to
    ${\mathcal A}^0$. 

    A key fact is that for every type $\sigma$ and every $A \in {\mathcal A}^\sigma$, we have the inequality 
\begin{equation} \label{CS}
    \llbracket A^2 \rrbracket_\sigma \ge 0,
\end{equation}
    where $A^2 = A \cdot A$. 
    We will ultimately prove our desired
    lower bound by adding many inequalities of this form for various types $\sigma$ and elements $A$ of
    ${\mathcal A}^\sigma$.

   Inequality (\ref{CS}) can be proved by elementary means; roughly speaking 
   we average the square of the number of copies of $F$ containing a particular copy of $\sigma$  over all
   such copies and discard terms of low order. It can also be proved \cite[Theorem 3.14]{FA} using the 
   notion of  {\em random homomorphism} discussed below in subsection \ref{semidefinite}.

    We will prove that $K^3_{red} + K^3_{blue} + K^3_{green} \ge 0.04$ by proving an equation 
    of the form
\begin{equation} \label{maineqn}
    K^3_{red} + K^3_{blue} + K^3_{green} - 0.04 - \sum_i \llbracket L_i^2 \rrbracket_{\sigma_i} = \sum \lambda_k M_k,
\end{equation} 
    where the $\sigma_i$'s are types, $L_i \in {\mathcal A}^{\sigma_i}$, the  $M_k$'s are $3$-coloured
    complete graphs and  $\lambda_k \ge 0$ for all $k$.   
    Equation (\ref{maineqn}) clearly implies that $K^3_{red} + K^3_{blue} + K^3_{green} \ge 0.04$, which is the 
    translation of claim \ref{densityclaim} in Proposition \ref{prop:flag} into the flag language.
 
    Since there are increasing sequences of $3$-coloured complete graphs in which the density of monochromatic
    triangles approaches $0.04$, there are $\Phi \in \mathrm{Hom}^+({\mathcal A}^0, {\mathbb R})$ such that $\Phi(K^3_{red} + K^3_{blue} + K^3_{green}) =  0.04$. 
    For any such $\Phi$ we must have
\begin{enumerate}[(i)]
\item  $\Phi(\llbracket L_i^2 \rrbracket_{\sigma_i}) = 0$ for all $i$,
\item  $\lambda_k = 0$ for all $k$ such that $\Phi(M_k) > 0$, and
\item  $\Phi(M_k) = 0$ for all $k$ such that $\lambda_k > 0$. 
\end{enumerate} 
    The last of these points is the key to proving the second claim in Proposition \ref{prop:flag}. 
    We will verify that for all  $H \in {\mathcal H}$,
    $H$ is a linear combination of $M_k$'s such that $\lambda_k > 0$.  It follows that for all such $H$, 
    $\Phi(H) = 0$ for any $\Phi$ with $\Phi(K^3_{red} + K^3_{blue} + K^3_{green}) =  0.04$.
    This assertion is exactly the translation into flag language of  
    claim \ref{forbidclaim} in Proposition \ref{prop:flag}.  

\subsection{Proof of Proposition \ref{prop:flag}}

    To prove  Proposition \ref{prop:flag}  we need to specify  ten types, several hundred flags, and ten $27 \times 27$ matrices. 
    Rather than attempting to render the details of the proof in print, we have chosen to describe its structure 
    here and make all the data available online, together with programs which can be used to verify them.

    Let $\sigma$ be a type and let $L_1, \ldots, L_t \in {\mathcal A}^\sigma$, where each $L_i$
    is a real linear combination of a fixed set of $\sigma$-flags $F_1, \ldots, F_n$. By standard
    facts in linear algebra,
\[
    L_1^2 + \ldots + L_t^2 = \sum_{i j} Q_{i j} F_i \cdot F_j
\]
    for some $n \times n$ positive semidefinite symmetric matrix $Q$, and conversely any expression of 
    the form  $\sum_{i j} Q_{i j} F_i \cdot F_j$ for a positive semidefinite $Q$ is a sum of squares. 

    In our case we will have ten types $\tau_r$ for $1 \le r \le 10$, each with $\vert \tau_r \vert = 3$.
    The types are chosen to include representative elements of each isomorphism class of $3$-coloured
    triangles.  

    For each type $\tau_r$ we will have a complete list $F^r_1, \ldots, F^r_{27}$ of the $\tau_r$-flags
    on $4$ vertices. 
    In line with the discussion in subsection \ref{averaging}, we will specify for each $r$ an $27 \times 27$ symmetric
    matrix $Q^r$ and will actually prove an equation of the form 
\begin{equation} \label{maineqn2}
    K^3_{red} + K^3_{blue} + K^3_{green} - 0.04 - \sum_r \llbracket Q_r \rrbracket_{\sigma_r} = \sum \lambda_k M_k,
\end{equation} 
    where $Q_r = \sum_{i j} Q^r_{i j} F^r_i F^r_j$, each matrix $Q^r$ is positive semidefinite, and each coefficient
    $\lambda_k$ is non-negative.  
    The matrices $Q^r$ will have rational entries, so the whole computation can be done exactly using rational
    arithmetic.

    By the definition of flag multiplication, each product $F^r_i \cdot F^r_j$ can be written as a linear
    combination of elements of ${\mathcal F}^{\sigma_r}_5$, so each term $\sum_r \llbracket Q_r \rrbracket_{\sigma_r}$
    is a linear combination of elements of ${\mathcal M}_5$. The $3$-coloured complete graphs $M_k$ appearing in equation (\ref{maineqn2}) 
    will be the $792$ elements of ${\mathcal M}_5$. 
    Below we write ``$q(A, B)$'' for ``the coefficient of $B$ in the expansion of $A$''.

    Given the coefficient matrices $Q^r$, we must first verify that they are positive semidefinite (a routine calculation).
    We must then expand the left hand side of equation (\ref{maineqn2}) in the form $\sum \lambda_k M_k$, and check that
    $\lambda_k \ge 0$ for all $k$. Clearly
\[
    \lambda_k = p(K^3_{red}, M_k) + p(K^3_{blue}, M_k)  + p(K^3_{green},M_k)  - 0.04 - \sum_r q(\llbracket Q_r \rrbracket_{\sigma_r}, M_k),
\]
    and 
\[
   q(\llbracket Q_r \rrbracket_{\sigma_r}, M_k)  = \sum_{i j}  Q^r_{i j} q(\llbracket F^r_i \cdot F^r_j \rrbracket_{\sigma_r}, M_k),
\]
    so the main computational task in verifying the proof is to compute the coefficients $q(\llbracket F^r_i \cdot F^r_j \rrbracket_{\sigma_r}, M_k)$.

    A useful lemma of Razborov  gives a probabilistic interpretation of $q(\llbracket F^r_i \cdot F^r_j \rrbracket_{\sigma_r}, M_k)$
    which obviates the need to compute  $F^r_i \cdot F^r_j$ and $\llbracket F^r_i \cdot F^r_j \rrbracket_{\sigma_r}$
    before computing $q(\llbracket F^r_i \cdot F^r_j \rrbracket_{\sigma_r}, M_k)$. The lemma states that for any type $\tau$,
    any $\tau$-flags $K_1$ and $K_2$ and any $m$ which is large enough to express $\llbracket K_1 \cdot K_2 \rrbracket_\tau$
    as a linear combination of elements of ${\mathcal M}_m$, the coefficient $q(\llbracket K_1 \cdot K_2 \rrbracket_\tau, L)$
    of $L \in {\mathcal M}_m$  is the probability that
    choosing a random injection ${\boldsymbol \theta}$
    from $V(\tau)$ to $V(L)$ and then random sets ${\mathbf X}$ and ${\mathbf Y}$ of the appropriate size
    with ${\mathbf X} \cap {\mathbf Y} = \im({\boldsymbol \theta})$ gives flags $(L[{\mathbf X}], \theta)$ and $(L[{\mathbf Y}], \theta)$ such that
    $(L[{\mathbf X}], \theta)$ is isomorphic to $K_1$  and $(L[{\mathbf Y}], \theta)$ is isomorphic to $K_2$. 
    The proof is straightforward.    

   To complete the proof of Proposition \ref{prop:flag},  we must now compute the coefficients $\lambda_k$
 and verify that for all $k$
\begin{enumerate}[(i)]
\item $\lambda_k \ge 0$;
\item  For all $H \in {\mathcal H}$, 
 if $p(H, M_k) > 0$ then $\lambda_k > 0$. 
\end{enumerate}        
  The data for the proof and a Maple worksheet which verifies it can be found online at the URL
    {\tt http://www.math.cmu.edu/users/jcumming/ckpsty}. Further, the version of this paper on the
    arxiv has an appendix   with the data for the proof.

\subsection{Semidefinite programming}  \label{semidefinite}
 
    The proof described in the preceding section was obtained using {\em semidefinite programming}. 
    In our case we fixed the types $\tau_r$ and flags $F^r_j$, and set up a semidefinite programming problem
    where the unknowns are the  matrices $(Q^1, \ldots, Q^{10})$ and the goal is to maximise a lower bound for 
    $K^3_{red} + K^3_{blue} + K^3_{green}$.  Using the CSDP and SDPA solvers,
    we produced a proof of a lower bound of the form $0.04 - \epsilon$ where $\epsilon$ is about $10^{-6}$. 

    We now needed to perturb the coefficients in our matrices $Q^r$ to achieve the optimal value $0.04$
    for the lower bound. This was not completely trivial, because (as we already mentioned at the end
    of subsection \ref{averaging}) there are many constraints that must be satisfied by any choice
    of $Q^r$'s that achieves the optimal bound. Some of them are related to so-called
    {\em random homomorphisms} from~\cite[Section 3.2]{FA} as explained in~\cite[Section 4]{RazTuran}.
    If $\Phi \in \mathrm{Hom}^+({\mathcal A}^0, {\mathbb R})$ and $\sigma$ is a type
    such that (viewing $\sigma$ as an element of ${\mathcal A}^0$) $\Phi(\sigma) > 0$, then we may use $\Phi$ to  construct a certain probability measure on
    $\mathrm{Hom}^+({\mathcal A}^\sigma, {\mathbb R})$, which we may view (using  probabilistic language) as a {\em random homomorphism}
    ${\mathbf \Phi}^\sigma$.
    One of the properties of  ${\mathbf \Phi}^\sigma$ is that for any $F \in {\mathcal A}^\sigma$ the
    expected value of ${\mathbf \Phi}^\sigma(F)$ is given by the formula
\[
    E({\mathbf \Phi}^\sigma(F)) = \frac{\Phi(\llbracket F \rrbracket_\sigma)}{\Phi(\llbracket 1 \rrbracket_\sigma)}.
\] 
    So, we can view the inequality $[F^2]_\sigma \ge 0$ as an averaging argument
    analogous to the Cauchy-Schwartz theorem \cite[Theorem 3.14]{FA}.     
    
    Let $Q_r$ be one of the quadratic forms appearing in a proof of the optimal bound and
    let $Q_r=\sum_k m^2_k$ where each $m_k$ is a linear combination of the flags $F^r_i$.
    Recall from subsection \ref{averaging}
    that if $\Phi$  is such that  $\Phi(K^3_{red} + K^3_{blue} + K^3_{green}) =  0.04$,
    then $\Phi(\llbracket Q_r \rrbracket_{\tau_r}) = 0$ for all $r$.
    If $\Phi(\llbracket Q_r \rrbracket_{\tau_r})=0$, then it holds with probability one that
    ${\mathbf \Phi}^{\tau_r}(m_k) = 0$. This yields that all eigenvectors of $Q_r$
    corresponding to non-zero eigenvalues must lie in a certain linear space.

    However,
    in our case we could not derive enough relations of this kind from the known extremal $\Phi$. At this point we inspected our 			  	   	numerical data, in particular we analysed the eigenvectors
 		corresponding to the very small eigenvalues and we guessed additional relations to complete the
		proof. Oleg Pikhurko~\cite{pikcom} offered us the following explanation of the origin of these relations.
	 	It is possible to alter the known extremal $\Phi$ in such a way that some values of
	  ${\mathbf \Phi}^{\tau_r}$ change $\Theta(\eps)$ but the density of monochromatic triangles
	  changes only $O(\eps^3)$. This yields that ${\mathbf \Phi}^{\tau_r}(m_k) = O(\eps^{3/2})$
 		with probability one which further restricts the linear space containing all eigenvectors of
 		$Q_r$ corresponding to non-zero eigenvalues.

\section{Proof of Theorem~\ref{mainthm}}
\subsection{Finding a standard subgraph of $G$}
Define constants $\eps , \eps _1, \eps _2 , \eps _3, \eps _4, \eps _5, \eps _6, \eps _7 , \eps _8 , \eps _9 , \eps _{10}$ 
and integers $n_0, n_1, n_2$ such that $n_0$ and $\eps$ satisfy the assertion of Proposition~\ref{prop:flag} and
\begin{align}\label{hier}
0<1/n_0 \ll \eps \ll 1/n_1 \ll \eps _1 \ll \eps _2  \ll \eps _3 \ll 1/n_2 \ll \eps _4 \ll \eps _5 \ll \eps _6 \ll
\eps _7 \ll \eps _8 \ll \eps _9 \ll \eps _{10} \ll 1.
\end{align}
 Let $G$ be a $3$-coloured complete graph on $n \geq n_0$ vertices with  
$p({\mathcal K}^3, G)$ minimised. We may assume the three colours used are red, green and blue. 
Note that, by the minimality of $G$,
 $p({\mathcal K}^3, G)  \leq p({\mathcal K}^3, G_{ex} (n)) \leq 0.04$.
  Since $n\ge n_0$, Proposition~\ref{prop:flag} implies that
  $p({\mathcal K}^3, G)  \geq 0.04 - \eps$ and $p({\mathcal H}, G) < \eps$.

Let us call an induced subgraph $G' \subseteq G$ {\em $\eps_1$-standard} if 
\begin{itemize}
 \item[(i)] $p({\mathcal K}^3, G') \le 0.04+\eps_1$;
 \item[(ii)] $p({\mathcal H}, G') =0$.
\end{itemize}

Now we randomly pick $n_1$ vertices from $G$ to induce a subgraph $G'$.
\begin{claim}\label{cstan}
$\Pr(G'\mbox{ is $\eps_1$-standard}) \geq 1-\eps_2$.
\end{claim}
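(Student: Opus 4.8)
The plan is to prove Claim~\ref{cstan} by a simple first-moment (union bound) argument, using the two global properties we already know about $G$: namely $p(\mathcal K^3,G)\le 0.04$ and $p(\mathcal H,G)<\eps$. The random subgraph $G'$ is induced on $n_1$ vertices chosen uniformly at random from $V(G)$, so for any fixed $4$-vertex $3$-coloured complete graph $H$, the probability that a \emph{fixed} $4$-subset of $V(G')$ induces a copy of $H$ is exactly $p(H,G)$ up to a negligible error coming from sampling without replacement (since $n_1$ is a constant and $n\ge n_0\gg n_1$, this error is $O(n_1^2/n)<\eps$ by our hierarchy). The same holds with $\mathcal K^3$ (a family of $3$-vertex graphs) in place of $H$. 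So I would first record that $\Ex[p(\mathcal H,G')]\le p(\mathcal H,G)+o(1)<2\eps$ and $\Ex[p(\mathcal K^3,G')]\le p(\mathcal K^3,G)+o(1)\le 0.04+\eps^{1/2}$, say.

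For property~(ii), since $p(\mathcal H,G')$ is a nonnegative random variable, Markov's inequality gives $\Pr(p(\mathcal H,G')>0)=\Pr(p(\mathcal H,G')\ge \binom{n_1}{4}^{-1})\le \binom{n_1}{4}\,\Ex[p(\mathcal H,G')] < \binom{n_1}{4}\cdot 2\eps$, which is at most $\eps_2/2$ because $\eps\ll 1/n_1\ll\eps_2$. For property~(i), I would apply Markov to the nonnegative variable $p(\mathcal K^3,G')-\big(0.04+\eps^{1/2}\big)$... except that this variable can be negative, so instead I use the cleaner estimate: by Markov applied to $p(\mathcal K^3,G')$ directly is too weak, so I bound the deviation. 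The clean route: write $X=p(\mathcal K^3,G')$; we have $\Ex[X]\le 0.04+\eps^{1/2}$ and $0\le X\le 1$, and I want $\Pr(X>0.04+\eps_1)$ small. Since $\eps^{1/2}\ll \eps_1$ (because $\eps\ll 1/n_1\ll\eps_1$ forces $\eps^{1/2}<\eps_1/2$ for $n_0$ large), apply Markov: $\Pr(X\ge 0.04+\eps_1)\le \frac{\Ex[X]}{0.04+\eps_1}$ — this is near $1$ and useless. So instead I need a concentration bound, not just first moment: I would use that $X$ is an average of indicator variables over $\binom{n_1}{3}$ triples, and apply a bounded-differences (Azuma–Hoeffding) inequality to the vertex-exposure martingale, or note that $\Ex[X]\le 0.04+\eps^{1/2}$ together with the fact that changing one of the $n_1$ chosen vertices changes $X$ by at most $O(1/n_1)$ gives, via McDiarmid, $\Pr(X\ge\Ex[X]+\eps_1/2)\le 2\exp(-c\,\eps_1^2 n_1)$, which is below $\eps_2/2$ since $1/n_1\ll\eps_1$ and $\eps_1\ll\eps_2$. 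A union bound over the two bad events then yields $\Pr(G'\text{ is }\eps_1\text{-standard})\ge 1-\eps_2$.

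The step I expect to be the main obstacle is getting property~(i) with the \emph{right} slack: a pure first-moment bound is too weak to push $p(\mathcal K^3,G')$ below $0.04+\eps_1$ with probability $1-\eps_2$, so the concentration of $p(\mathcal K^3,G')$ around its mean (which is at most $0.04+o(1)$) is what actually does the work. The two natural tools are (a) a bounded-differences inequality on the process of revealing the $n_1$ sampled vertices one at a time — each revelation perturbs $p(\mathcal K^3,G')$ by $O(1/n_1)$ — or (b) a second-moment / variance computation, which also suffices since the variance of a subgraph-sampling density is $O(1/n_1)$. Either gives a deviation probability of the form $\exp(-\Theta(\eps_1^2 n_1))$ or $O(1/(\eps_1^2 n_1))$, both comfortably $\ll\eps_2$ by the hierarchy~\eqref{hier}. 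Property~(ii) is comparatively easy, being a genuine first-moment argument off the bound $p(\mathcal H,G)<\eps$ with $\eps\ll 1/n_1$. Finally I would double-check that the sampling-without-replacement corrections are all absorbed, which is immediate since $n_1$ is constant while $n\to\infty$.
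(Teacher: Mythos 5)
Your proof is correct but diverges from the paper's at exactly the step you flagged as the ``main obstacle.'' For property~(ii) your argument is the same as the paper's: Markov on $p(\mathcal H,G')$ together with the integrality observation that $p(\mathcal H,G')$ is a nonnegative integer multiple of $\binom{n_1}{4}^{-1}$, so a bound strictly below this threshold forces $p(\mathcal H,G')=0$. For property~(i), however, your conclusion that a first-moment bound cannot do the job and that concentration ``is what actually does the work'' is where you missed the paper's device. The paper first applies Proposition~\ref{prop:flag}(\ref{densityclaim}) to $G'$ itself --- it has $n_1$ vertices, and $1/n_1\ll\eps_1$ lets one take the error parameter to be $\eps_1^2$ --- to obtain the \emph{deterministic} lower bound $p(\mathcal K^3,G')\ge 0.04-\eps_1^2$. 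This makes $Z:=p(\mathcal K^3,G')-(0.04-\eps_1^2)$ a \emph{nonnegative} random variable with $\mathbb E[Z]\le\eps_1^2$, since $\mathbb E[p(\mathcal K^3,G')]=p(\mathcal K^3,G)\le 0.04$. Plain Markov then gives $\Pr(Z\ge\eps_1)\le\eps_1$, which is property~(i). In other words, the lower bound of Proposition~\ref{prop:flag} pins $p(\mathcal K^3,G')$ from below, so a one-sided Markov bound applied to the shifted variable suffices --- no concentration inequality is needed.

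Your alternative --- McDiarmid on the vertex-exposure filtration for sampling without replacement (or Chebyshev after a variance computation) --- is nonetheless valid: each of the $n_1$ sampled vertices perturbs $p(\mathcal K^3,G')$ by $O(1/n_1)$, so the deviation probability is $\exp(-\Theta(\eps_1^2 n_1))$, respectively $O(1/(\eps_1^2 n_1))$, both $\ll\eps_2$ under the hierarchy~(\ref{hier}). What each route buys: the paper's argument is shorter and stays entirely within Markov, but it relies essentially on the lower bound of Proposition~\ref{prop:flag} already kicking in at the scale $n_1$. Your concentration route is heavier but needs only $\mathbb E[p(\mathcal K^3,G')]\le 0.04$ plus a bounded-differences estimate, so it would still work even if one only had an upper bound for the extremal density with no matching lower bound. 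One small cleanup: the $o(1)$ corrections for sampling without replacement that you were tracking are unnecessary, since $\mathbb E[p(\mathcal K^3,G')]=p(\mathcal K^3,G)$ exactly --- a uniformly random $3$-subset of a uniformly random $n_1$-subset of $V(G)$ has the same distribution as a uniformly random $3$-subset of $V(G)$; likewise $\mathbb E[p(\mathcal H,G')]=p(\mathcal H,G)<\eps$ exactly.
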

\proof
Since $1 /n_1 \ll \eps _1$,
Proposition~\ref{prop:flag} implies that $p^{\rm min}({\mathcal K}^3, n_1) > 0.04-\eps ^2 _1$. Thus, $Z:=p({\mathcal K}^3, G')-(0.04 -\eps ^2 _1) >0$.
Note that $\mathbb E (Z) \leq \eps _1 ^2$ since $p({\mathcal K}^3, G) \leq 0.04$. Hence, by Markov's inequality,
$$ \Pr(Z\ge \eps _1)\leq \frac{\eps _1 ^2}{\eps _1}= \eps_1$$
and therefore
$$ \Pr (p({\mathcal K}^3, G') \leq 0.04+\eps _1 )\geq 1-\eps _1.$$
By Markov's inequality,
\[
\Pr\left(   p({\mathcal H}, G')   <\frac{ 2\eps}{\eps_2}\right)\geq  1-\eps_2 /2.
\]
Note that (\ref{hier}) implies that  $2 \eps \binom{n_1}{ 4} / \eps _2 <1$. Thus, the claim follows.
\endproof
In the next two subsections we will build up structure in our $\eps_1$-standard subgraphs $G'$, 
thereby obtaining that each such $G'$ has
`similar' structure to $G_{ex} (n_1)$.

\subsection{Properties of maximal monochromatic cliques in $G'$}
Consider any $\eps _1$-standard subgraph $G'$ of $G$ on $n_1$ vertices.
Let $\mathcal X$ be the set of maximal monochromatic cliques of order at least $4$ in $G'$.
So a clique $X_1$ in $\mathcal X$ cannot strictly contain another clique $X_2 \in \mathcal X$. However,
$\mathcal X$ may contain cliques that intersect each other. 
Since $n_1$ is sufficiently large, $G'$ contains a ${\mathcal K}^4$ by Ramsey's theorem. Thus, $|\mathcal X|\ge 1$.

\begin{claim}\label{c:mono2}
Let $X\in \mathcal X$ and $y \in V(G') \backslash V(X)$. All but one of the edges $xy$ with $x\in V(X)$ have the same colour, which is different from the  colour of $X$. 
The remaining edge is either of that same colour or of the colour of $X$.
\end{claim}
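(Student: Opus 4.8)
The plan is to exploit the fact that $G'$ is $\eps_1$-standard, so in particular $p(\mathcal H, G')=0$: no $4$-vertex induced subgraph of $G'$ lies in any of the ``bad'' classes $\mathcal H(2,1,0)$, $\mathcal H(1,1,1)$, $\mathcal H(0,2,1)$. Fix $X\in\mathcal X$, say with colour red, and fix $y\notin V(X)$. For each $x\in V(X)$ let $c(x)\in\{\text{red},\text{green},\text{blue}\}$ be the colour of $xy$. Since $|V(X)|\ge 4$, pick any three vertices $x_1,x_2,x_3\in V(X)$; then $G'[x_1,x_2,x_3,y]$ contains the red triangle $x_1x_2x_3$, and the three remaining edges have colours $c(x_1),c(x_2),c(x_3)$. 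This $4$-vertex graph must \emph{not} be an $\mathcal H$. First I would run through the possibilities for the multiset $\{c(x_1),c(x_2),c(x_3)\}$ measured against the partition of the three extra edges into ``same colour as $X$'' ($i$ of them) versus the other two colours ($j\ge k$ of them): the forbidden patterns are $(i,j,k)=(2,1,0),(1,1,1),(0,2,1)$. The only multisets of three colours drawn from $\{$red, green, blue$\}$ that avoid all three forbidden patterns are: all three equal to red $(3,0,0)$; all three equal to a single non-red colour $(0,3,0)$; exactly one red and two equal non-red $(1,2,0)$; and exactly two red and one non-red... wait, $(2,1,0)$ is forbidden. So the surviving list is $(3,0,0)$, $(0,3,0)$, and $(1,2,0)$. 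In every case, among any three of the edges $xy$ at least two share a common non-red colour, or all three are red, or all three equal one non-red colour.

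Next I would upgrade this ``local'' statement about triples to the claimed global statement about all of $V(X)$. Suppose first that some edge $x_0y$ is red. If there were another red edge $x_1y$, pick a third $x_2\in V(X)$; then among $c(x_0),c(x_1),c(x_2)$ we have at least two reds, forcing pattern $(2,1,0)$ or $(3,0,0)$ — so to avoid $(2,1,0)$ we need $c(x_2)$ red as well, i.e. \emph{all} edges $xy$ are red. That is the degenerate situation where $X\cup\{y\}$ is a larger red clique, contradicting maximality of $X$ in $\mathcal X$ — so I would note this case cannot occur (or, more carefully, it yields the ``remaining edge of colour of $X$'' branch only when there is exactly one such; two or more red edges is impossible). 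Hence at most one edge $xy$ is red. For the non-red edges: among any three non-red $c(x_i)$'s we cannot have pattern $(0,2,1)$ and cannot have $(1,1,1)$, so three non-red edges cannot use both green and blue; thus all the non-red edges $xy$ have a single common colour, say blue. Combining: all edges $xy$ are blue except possibly one, which is either blue or red. This is exactly the statement of the claim, with ``blue'' playing the role of ``the same colour, different from the colour of $X$''.

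The main obstacle — really the only delicate point — is the bookkeeping in the case analysis: one must be careful that the classes $\mathcal H(i,j,k)$ are defined with $j\ge k$ and $i+j+k=3$, and that $\mathcal H$ collects \emph{all} isomorphism types with the stated edge-colour profile, so that a single induced $4$-set genuinely rules out the whole profile regardless of which vertices carry which colours. I would also double-check the boundary between ``exactly one non-red edge exists'' and ``two or more'': with exactly one non-red edge the other two are red, giving profile $(2,1,0)$ among those three, forbidden — so in fact if a red edge $x_0y$ exists then there must be at least two non-red edges, all of the same colour, and exactly one red edge; if no red edge exists then all edges $xy$ share one non-red colour and the ``remaining edge'' clause is vacuous. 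Either way the claim holds, and I would present the argument by fixing representative triples and invoking $p(\mathcal H,G')=0$ at each step.
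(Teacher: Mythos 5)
Your proof is correct and follows essentially the same approach as the paper's: both arguments show that at most one edge from $y$ to $X$ can be the colour of $X$ (by forbidding $\mathcal{H}(2,1,0)$ and invoking maximality of $X$), and that the non-$X$-coloured edges cannot use two distinct colours (by forbidding $\mathcal{H}(0,2,1)$). Your systematic enumeration of the allowed triple profiles $(3,0,0)$, $(1,2,0)$, $(0,3,0)$ is a slightly more elaborate presentation, but the underlying mechanism is identical to the paper's two observations.
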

\proof
Assume $X$ is coloured red. By definition of $\mathcal X$, we cannot have that all edges between $X$ and $y$ are red. This implies
that at most one such edge is red (else $G'$ contains an  ${\mathcal H}(2,1,0)$, a contradiction to (ii)). This in turn implies
that there does not exist both green and blue edges between $X$ and $y$ (else $G'$ contains an  ${\mathcal H}(0,2,1)$). The claim
now follows.
\endproof
\medskip

\begin{claim}\label{noint}
Suppose $X_1, X_2 \in \mathcal X$ have different colours. Then $X_1$ and $X_2$ are vertex-disjoint.
\end{claim}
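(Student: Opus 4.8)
\textbf{Proof plan for Claim~\ref{noint}.} The plan is to argue by contradiction: suppose $X_1$ is a maximal monochromatic clique coloured, say, red, and $X_2$ is a maximal monochromatic clique coloured blue, and that they share a vertex $v \in V(X_1) \cap V(X_2)$. Since $|X_1|, |X_2| \ge 4$ and they have different colours, we can pick $x_1 \in V(X_1)\setminus V(X_2)$ and $x_2 \in V(X_2) \setminus V(X_1)$ (these exist because neither clique contains the other, as $X_2$ is monochromatic blue and $X_1$ is monochromatic red). The key is to extract, using Claim~\ref{c:mono2}, enough forced edge colours among a small set of vertices of $X_1 \cup X_2$ to exhibit a forbidden configuration from $\mathcal{H}$, contradicting property (ii) of $\eps_1$-standardness.

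First I would apply Claim~\ref{c:mono2} to the red clique $X_1$ and the external vertex $x_2$: all but at most one edge from $x_2$ to $V(X_1)$ have a common colour $c \ne \mathrm{red}$, and the exceptional edge is either of colour $c$ or red. Since $v \in V(X_1)$ and the edge $vx_2$ lies in the blue clique $X_2$, it is blue, which forces $c = \mathrm{blue}$ (if $c$ were green then $vx_2$ would have to be the exceptional edge and hence red or green — not blue). So at least $|V(X_1)| - 1 \ge 3$ of the edges from $x_2$ to $X_1$ are blue. Symmetrically, applying Claim~\ref{c:mono2} to the blue clique $X_2$ and the external red-clique vertex $x_1$ shows that all but at most one edge from $x_1$ to $V(X_2)$ are red, the common colour being forced to red because $x_1 v$ is red (it lies in $X_1$). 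Now I would choose three vertices $v, a, b$ inside $V(X_1) \cap V(X_2)$ if that intersection is large enough — but since I only know $|V(X_1) \cap V(X_2)| \ge 1$, I instead work with a carefully chosen $4$-set. Take $v$ (common), a second red-clique vertex $a \in V(X_1)\setminus V(X_2)$, a second blue-clique vertex $b \in V(X_2)\setminus V(X_1)$, plus one more vertex; on $\{v,a,b\}$ we have: $va$ red, $vb$ blue, and $ab$ of some colour. Consider the $4$-set $\{v, a, a', b\}$ where $a' \in V(X_1)$ is another red-clique vertex distinct from $v,a$ (exists since $|V(X_1)|\ge 4$): then $va, va', aa'$ are all red, so $\{v,a,a'\}$ is a red triangle; the edge $a b$ and $a' b$ and $v b$ complete the picture, and $vb$ is blue. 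By Claim~\ref{c:mono2} applied to $X_1$ and $b$, the edges $ab, a'b, vb$ share a colour with at most one exception — but $vb$ is blue, so at least two of $ab,a'b$ are blue as well, giving a red triangle on $\{v,a,a'\}$ plus at least two blue edges to $b$, hence a member of $\mathcal{H}(0,2,1)$, contradicting (ii).

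The main obstacle is making sure the exceptional edge in Claim~\ref{c:mono2} cannot be used to wriggle out of every configuration: I must verify that with $|V(X_1)| \ge 4$ there are always at least two non-exceptional (blue) edges from $b$ into the red triangle I select, so that the count of same-colour edges (two blue plus the red triangle plus the third red-clique edge gives the $(0,2,1)$ pattern) actually lands inside $\mathcal{H}$ as defined. Because $|V(X_1)|\ge 4$ gives three choices of red-clique vertex besides the one playing the role of the ``$b$-neighbour exception'', and only one edge from $b$ to $X_1$ may be exceptional, this is guaranteed; the contradiction with $p(\mathcal{H}, G') = 0$ then closes the argument. A symmetric remark handles the possibility that the forbidden pattern one actually produces is an $\mathcal{H}(1,1,1)$ rather than an $\mathcal{H}(0,2,1)$, depending on the colour of the edge $ab$, but in all cases the induced $4$-vertex subgraph contains a monochromatic triangle together with the extra edges needed to lie in $\mathcal{H}$, so (ii) is violated and $X_1, X_2$ must be vertex-disjoint.
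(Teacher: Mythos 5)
Your plan starts well but the final step goes in the wrong direction and the configuration you produce is not forbidden. You apply Claim~\ref{c:mono2} to the \emph{red} clique $X_1$ and the external vertex $b \in V(X_2)\setminus V(X_1)$: since $vb$ is blue, the common colour of edges from $b$ into $X_1$ is blue, so among $\{vb, ab, a'b\}$ at most one edge is exceptional, hence at least two are blue. So far so good. But the exceptional edge (if any) can only be blue or red (never green), by the second sentence of Claim~\ref{c:mono2}. Thus on $\{v,a,a',b\}$ you get a red triangle $\{v,a,a'\}$ together with either three blue pendant edges to $b$, or two blue and one red. These are configurations of type ${\mathcal H}(0,3,0)$ and ${\mathcal H}(1,2,0)$ respectively, and \emph{neither} belongs to ${\mathcal H} = {\mathcal H}(2,1,0) \cup {\mathcal H}(1,1,1) \cup {\mathcal H}(0,2,1)$. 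In fact both of these patterns genuinely occur in the extremal graphs of $\mathcal G_n$ (a red triangle inside one part with a vertex from another part, possibly with one recoloured matching edge), which is precisely why they are not forbidden. So the conclusion ``hence a member of ${\mathcal H}(0,2,1)$'' is simply false: you cannot produce the required green edge.

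The correct move, which you already set up but then abandoned, is to apply Claim~\ref{c:mono2} in the \emph{other} direction: to the \emph{blue} clique $X_2$ with external vertices from $X_1 - v$. Since $a v$ is red (and $v \in X_2$), the common colour of edges from $a$ into $X_2$ must be red; the same holds for $a'$. With $\lvert X_2\rvert \ge 4$, each of $a,a'$ misses at most one vertex of $X_2 - v$ with a non-red edge, and $\lvert X_2 - v\rvert \ge 3$, so there is a common vertex $b \in X_2 - v$ with both $ab$ and $a'b$ red. Now $G'[v,a,a',b]$ has the red triangle $\{v,a,a'\}$, two further red edges $ab, a'b$, and one blue edge $vb$: this is an ${\mathcal H}(2,1,0)$, giving the desired contradiction with $p({\mathcal H},G')=0$. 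This is the paper's argument. The asymmetry that matters is whether the pendant vertex contributes \emph{extra edges of the triangle's colour} (good, since $i=2$ lands you in ${\mathcal H}$) or edges of its own clique's colour (useless, since these patterns survive in the extremal graphs).
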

\proof
Since $X_1$ and $X_2$ have different colours, $|V(X_1)\cap V(X_2)|\leq 1$. Suppose for a contradiction there exists
a vertex $x \in V(X_1) \cap V(X_2)$. Suppose $X_1$ is red and $X_2$ is blue. 
For each $x_1 \in X_1 - x$, since $x_1 x$ is red, Claim~\ref{c:mono2} implies that all but at most one of the edges from $x_1$
to $X_2$ are red. Thus, there exists distinct $x',x'' \in X_1 -x$ and $x''' \in X_2 -x$ such that
$x'x'''$ and $x''x'''$ are red. But since $xx'''$ is blue, $G'[x,x',x'',x''']$ is an ${\mathcal H}(2,1,0)$, a contradiction
to (ii).
\endproof
\medskip

\begin{claim}~\label{c:mono3}
 \begin{itemize}
 \item[(a)] If $X_1,X_2\in \mathcal X$ have different colours, then there is a vertex $v_1\in V(X_1)$ and a vertex 
 $v_2\in V( X_2)$ 
 such that all edges between $X_1-v_1$ and $X_2-v_2$ have the same colour, and this colour is different from the colours of $X_1$ and $X_2$.
 \item[(b)] If $X_1,X_2\in \mathcal X$ have the same colour, then either $X_1$ and $X_2$ share exactly one vertex $v$, and  all edges  between $X_1-v$ and $X_2-v$ have a common colour, or $X_1$ and $X_2$ are disjoint, there is a (possibly empty) matching of the colour of $X_1$ and $X_2$ between $X_1$ and $X_2$, and  all other edges between $X_1$ and $X_2$ have the same colour, different from the colour of $X_1$ and $X_2$.
 \end{itemize}
\end{claim}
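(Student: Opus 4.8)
The plan is to use Claim~\ref{c:mono2} (together with Claim~\ref{noint}) as the main tool, invoking the forbidden family $\mathcal H$ only once --- to bound the number of ``atypical'' vertices in the different-colour case. For $X \in \mathcal X$ and $v \notin V(X)$, write $c_v$ for the colour supplied by Claim~\ref{c:mono2}, that is, the colour of all but at most one of the edges from $v$ to $X$; recall that $c_v$ is never the colour of $X$ and that the single exceptional edge, if present, has colour $c_v$ or the colour of $X$.

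For part~(a), let $X_1$ be red and $X_2$ blue; Claim~\ref{noint} makes them vertex-disjoint. The key preliminary step is that at most one $y \in V(X_2)$ has $c_y = \text{blue}$: two such vertices $y_1,y_2$ would, since $|V(X_1)| \ge 4$, have two common blue neighbours $x_1, x_2 \in V(X_1)$, and then $G'[x_1,x_2,y_1,y_2]$ would be a blue triangle with two further blue edges and one red edge, an $\mathcal H(2,1,0)$, contradicting (ii). Symmetrically at most one $x \in V(X_1)$ has $c_x = \text{red}$. Let $v_1$ be this exceptional vertex of $X_1$ if it exists and otherwise arbitrary, and define $v_2$ similarly. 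Then $c_x = \text{green}$ for every $x \in V(X_1)\setminus\{v_1\}$ (it is neither blue, by Claim~\ref{c:mono2}, nor red, by the previous step), and likewise $c_y = \text{green}$ for every $y \in V(X_2)\setminus\{v_2\}$. Finally, for $x \in V(X_1)\setminus\{v_1\}$ and $y \in V(X_2)\setminus\{v_2\}$, Claim~\ref{c:mono2} applied to $X_2$ and $x$ forces $xy \in \{\text{green},\text{blue}\}$, while applied to $X_1$ and $y$ it forces $xy \in \{\text{green},\text{red}\}$; hence $xy$ is green, a colour distinct from those of $X_1$ and $X_2$.

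For part~(b), let both cliques be red and assume $X_1 \ne X_2$. First I would show $|V(X_1) \cap V(X_2)| \le 1$: two common vertices $u,v$ would force any $x \in V(X_1)\setminus V(X_2)$ (nonempty since the cliques are maximal) to send the two red edges $xu,xv$ into $X_2$, contrary to Claim~\ref{c:mono2}; this yields the stated dichotomy. If $V(X_1)\cap V(X_2) = \{v\}$, then for each $x \in V(X_1)\setminus\{v\}$ the red edge $xv$ is the unique exceptional edge from $x$ to $X_2$, so all edges from $x$ to $V(X_2)\setminus\{v\}$ have the single colour $c_x$; symmetrically all edges from $y \in V(X_2)\setminus\{v\}$ to $V(X_1)\setminus\{v\}$ have colour $c_y$; comparing the two descriptions of an edge $xy$ gives $c_x = c_y$, so all of these colours coincide and the edges between $X_1-v$ and $X_2-v$ share one colour. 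If instead $X_1$ and $X_2$ are disjoint, then by Claim~\ref{c:mono2} every vertex of either clique has at most one red edge to the other, so the red edges between them form a (possibly empty) matching; moreover each $x \in V(X_1)$ has at least $|V(X_2)|-1 \ge 3$ non-red edges to $X_2$, necessarily all of colour $c_x$, and a short counting argument using $|V(X_1)|,|V(X_2)|\ge 4$ lets one compare $c_x$ with $c_{x'}$ and with $c_y$ along common non-red edges and conclude that all these values equal one fixed colour $c \ne \text{red}$; thus every non-red edge between $X_1$ and $X_2$ has colour $c$, which completes~(b).

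I expect the bookkeeping in part~(a) --- ensuring that the lone exceptional edge allowed by Claim~\ref{c:mono2} on each side is absorbed into the removed vertex $v_1$ or $v_2$, and that the cliques are big enough to supply the auxiliary vertices for the copy of $\mathcal H(2,1,0)$ --- to be the only genuinely delicate point; once the reduction to at most one atypical vertex per side is in place, the rest is a direct two-sided application of Claim~\ref{c:mono2}.
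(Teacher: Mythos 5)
Your proof is correct and follows essentially the same route as the paper: vertex-disjointness from Claim~\ref{noint}, repeated use of Claim~\ref{c:mono2} to control edge colours, and a single invocation of the ${\mathcal H}(2,1,0)$-free condition to bound the number of atypical vertices. The only cosmetic difference is in the preliminary step of part~(a): the paper first rules out two vertex-disjoint blue (resp.\ red) edges between $X_1$ and $X_2$ by a short bootstrap through Claim~\ref{c:mono2}, whereas you obtain the equivalent ``at most one atypical vertex per side'' bound directly via common neighbours; both land on the same ${\mathcal H}(2,1,0)$ contradiction, and the rest of the argument (including your more explicitly spelled-out treatment of part~(b), which the paper leaves as ``easy to see'') matches the paper's.
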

\proof If $X_1, X_2 \in \mathcal X$ have different colours, then by Claim~\ref{noint}, $X_1$ and $X_2$ are
vertex-disjoint. Suppose $X_1$ is red and $X_2$ is blue. Firstly, note that there does not exist distinct
$x'_1,x''_1 \in X_1$ and $x'_2, x''_2 \in X_2$ such that both $x'_1x'_2$ and $x''_1x''_2$ are blue.
Indeed, if such edges exist then by Claim~\ref{c:mono2}, $x'_1x''_2$ and $x''_1x'_2$ are red. Again by Claim~\ref{c:mono2},
this implies that every edge from $x'_2 $ to $X_1-x''_1$ is blue and every edge from $x''_2 $ to $X_1-x'_1$ is blue.
Let $a,b \in X_1 -\{x'_1,x''_1\}$. Then $G'[a,b,x'_2, x''_2]$ is an ${\mathcal H}(2,1,0)$, a contradiction.

An identical argument implies that there does not exist distinct
$x'_1,x''_1 \in X_1$ and $x'_2, x''_2 \in X_2$ such that both $x'_1x'_2$ and $x''_1x''_2$ are red.
By Claim~\ref{c:mono2} this implies that there exists at most one vertex $v_1 \in X_1$ such that $v_1$ sends at least
one red edge to $X_2$  and there exists at most one vertex $v_2 \in X_2$ such that $v_2$ sends at least
one blue edge to $X_1$. This implies that all the edges from $X_1-v_1$ to $X_2-v_2$ are green, and so (a) is satisfied.

Suppose $X_1,X_2 \in \mathcal X$ have the same colour, red say. Notice that $|V(X_1)\cap V(X_2)|\leq 1$, since
otherwise a vertex  in $V(X_1)\backslash V(X_2)$ would send at least two red edges to $X_2$, a contradiction to 
Claim~\ref{c:mono2}. If $|V(X_1)\cap V(X_2)|= 1$ then it is easy to see that, by Claim~\ref{c:mono2},
the first part of (b) holds.

If $X_1$ and $X_2$ are disjoint, then by Claim~\ref{c:mono2}, no vertex in $X_1$ sends more than one red edge to $X_2$ and
no vertex in $X_2$ sends more than one red edge to $X_1$. Thus, the red edges between $X_1$ and $X_2$ form a (possibly empty)
matching. Applying Claim~\ref{c:mono2} again shows that the second part of (b) holds.
\endproof

\subsection{Properties of the clique graph}
We now define a new $3$-coloured complete graph $F$ which we refer to as the \emph{clique graph}.
The vertex set of $F$ consists of the elements of $\mathcal X$ together with the vertices in $Y$
where $Y\subseteq V(G')$ is the set of vertices in $G'$ not contained in any of the cliques in $\mathcal X$. 
If $x,y \in Y$ then, in $F$, we colour $xy$ with the colour of $xy$ in $G$. 
If $X_1, X_2 \in \mathcal X$ then, in $F$, we colour the edge $X_1X_2$ with the colour of the majority
of the edges between $X_1$ and $X_2$ in $G$. (Note that this colour is well-defined by Claim~\ref{c:mono3}.)
Finally, given a vertex $y \in Y$ and $X \in \mathcal X$, in $F$ we colour the edge $yX$ with the colour of the majority
of the edges between $y$ and $X$ in $G$. (This colour is well-defined by Claim~\ref{c:mono2}.)

\begin{claim}\label{noX}
 No  ${\mathcal K}^3$ in $F$ contains a vertex $X \in \mathcal X$. Moreover, $F$ contains no ${\mathcal K}^4$.
\end{claim}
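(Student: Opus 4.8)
The plan is to first handle the case of a ${\mathcal K}^3$ containing a single cliqueanything vertex $X \in \mathcal X$, and then bootstrap to rule out a ${\mathcal K}^4$ in $F$ entirely. For the first part, suppose for contradiction that $F$ contains a monochromatic triangle on vertices $X, A, B$ where $X \in \mathcal X$; say the triangle is red. Here each of $A$ and $B$ is either a vertex of $Y$ or another element of $\mathcal X$. The key idea is that the definition of the majority colour together with Claims~\ref{c:mono2} and~\ref{c:mono3} forces a large ``mostly-red'' bipartite structure in $G'$ between $X$ (which has its \emph{own} colour, necessarily \emph{not} red since $X$ is a clique of one of the other two colours) and the objects $A$, $B$. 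Concretely, the edge $XA$ being red in $F$ means almost all $G'$-edges between the clique $X$ and $A$ (whether $A$ is a vertex or a clique) are red; similarly for $XB$; and the edge $AB$ red means almost all edges between $A$ and $B$ in $G'$ are red. Since $|V(X)| \ge 4$, we can pick two vertices $x_1, x_2 \in V(X)$ avoiding the at-most-one exceptional vertex from each relevant application of Claim~\ref{c:mono2}/\ref{c:mono3}, together with a vertex $a$ from (the clique or singleton) $A$ and a vertex $b$ from $B$, such that $x_1 a$, $x_2 a$, $x_1 b$, $x_2 b$, and $a b$ are all red, while $x_1 x_2$ is the colour of $X$, which is \emph{not} red. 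Then $G'[x_1, x_2, a, b]$ contains a red triangle ($x_1 a b$ or $x_2 a b$… more care is needed, but in fact $a b x_1$ is red) plus the edge $x_1 x_2$ of a different colour, i.e.\ it is an ${\mathcal H}$ — an ${\mathcal H}(2,1,0)$ if all five red edges are present, and in any case a member of ${\mathcal H}(i,j,k)$ with a monochromatic triangle — contradicting property (ii) of $\eps_1$-standardness. (The bookkeeping on exactly which of the nine configurations of colours on the six edges of $G'[x_1,x_2,a,b]$ arises, and checking it lies in ${\mathcal H}$, is the routine part.)

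For the ``moreover'' statement, suppose $F$ contains a ${\mathcal K}^4$ on vertices $P_1, P_2, P_3, P_4$, monochromatic of colour red, say. By the first part, none of the $P_i$ is an element of $\mathcal X$, so all four lie in $Y$; hence by definition of $F$ the six edges $P_i P_j$ all have colour red \emph{in $G'$ itself}. Now among the $n_1$ vertices of $G'$ there is, by Ramsey's theorem (since $n_1$ is large), a monochromatic ${\mathcal K}^4$; but more to the point, a red ${\mathcal K}^4$ on $\{P_1,P_2,P_3,P_4\}$ together with one more vertex $w \in V(G')$ chosen appropriately yields an ${\mathcal H}$: any vertex $w$ outside this red $K_4$ sends at most one red edge to it, else $G'$ contains an ${\mathcal H}(2,1,0)$; iterating, the red $K_4$ cannot be extended, but $\{P_1,\dots,P_4\}$ is then a maximal red clique of order $\ge 4$, so it \emph{is} an element of $\mathcal X$ — contradicting that the $P_i$ are vertices of $Y$. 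So the $P_i$ cannot all be singleton vertices, contradiction. Thus $F$ has no ${\mathcal K}^4$, establishing the claim.

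The main obstacle I expect is the first part, specifically the careful selection of witness vertices in $G'$ when $A$ and/or $B$ are themselves cliques rather than singletons: one must simultaneously dodge the ``one exceptional edge'' allowed by Claim~\ref{c:mono2} across several bipartite pairs ($X$–$A$, $X$–$B$, $A$–$B$, and also internally if $A,B \in \mathcal X$), which is why having $|V(X)| \ge 4$ (and $|V(A)|,|V(B)| \ge 4$ when they are cliques) is exactly enough slack. One then has to verify that whatever colour pattern results on the $4$-vertex induced subgraph genuinely lands in the explicitly-listed family ${\mathcal H} = {\mathcal H}(2,1,0) \cup {\mathcal H}(1,1,1) \cup {\mathcal H}(0,2,1)$; this is a finite check but needs to be done honestly, using that the ``minority'' edges are few and that $X$'s internal colour differs from red. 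Everything else — the reduction of the ${\mathcal K}^4$ case to the ${\mathcal K}^3$ case, and the maximality argument forcing a red $K_4$ in $Y$ to already be a member of $\mathcal X$ — is short once the first part is in hand.
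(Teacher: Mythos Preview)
Your approach is the paper's: find an $\mathcal H(2,1,0)$ for the first part, then reduce the second part to the first via the definition of $Y$. Two spots need tightening. In the second part, it is not true that every $w$ outside the red $K_4$ sends at most one red edge to it --- $w$ could send four red edges, producing a red $K_5$ rather than an $\mathcal H(2,1,0)$, so the $K_4$ may extend. The clean argument (which is all the paper says) is that any monochromatic $K_4$ in $G'$ is contained in \emph{some} maximal monochromatic clique of order $\ge 4$, hence in an element of $\mathcal X$; so its vertices cannot lie in $Y$. No iteration is needed.

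In the first part, drop the hedge ``in any case a member of $\mathcal H(i,j,k)$ with a monochromatic triangle''. If $ab$ does not have colour $c$, then $G'[x_1,x_2,a,b]$ contains no monochromatic triangle whatsoever (each of the four triangles on these vertices uses either the edge $x_1x_2$ or the edge $ab$), so it lies in no $\mathcal H(i,j,k)$ at all. You really must arrange all five of $x_1a$, $x_2a$, $x_1b$, $x_2b$, $ab$ to have colour $c$; once you do, the configuration is precisely $\mathcal H(2,1,0)$, and your plan for securing this via Claims~\ref{c:mono2},~\ref{c:mono3} and $|X|\ge 4$ is the right one.
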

\proof
The first part of the claim follows from Claim~\ref{c:mono2} since otherwise there would be an  ${\mathcal H}(2,1,0)$
 in $G'$, a contradiction to (ii). The second part of the claim follows from the first part together with the definition of $Y$.
\endproof

For every clique $X\in \mathcal X$, the edges in $F$ leaving $X$ must have different colours from $X$. Thus, 
 we have $|\mathcal X|+|Y|\le 35$. Indeed,
  otherwise each $X$ in $\mathcal X$ is incident to $18$ edges of the same colour in $F$. But then,
 since $R(4,4)=18$, $F$ contains a    ${\mathcal K}^4$ or a  ${\mathcal K}^3$ containing $X$, a contradiction 
 to Claim~\ref{noX}. If $|\mathcal X|\le 4$, then
  $p({\mathcal K}^3, G') \ge 4{\lfloor(n_1-34)/4 \rfloor\choose 3}/{n_1\choose 3}>0.04+\eps_1$, a contradiction to (i). Thus, $|\mathcal X|\geq 5$.
 
If there are three cliques in $\mathcal X$ of one colour, and another clique in $\mathcal X$
 of a different colour, then it is easy to see by Claim~\ref{c:mono3} that there must be a monochromatic triangle between these four cliques, a contradiction to Claim~\ref{noX}. Similarly, we cannot have two cliques in $\mathcal X$ of one colour, and also cliques in 
 $\mathcal X$ of the other two colours. Therefore, all cliques in $\mathcal X$ must have the same colour, say red. 
 
If $|\mathcal X|\geq 6$, then again $F[\mathcal X]$ contains a ${\mathcal K}^3$, a contradiction. So $|\mathcal X|=5$. 
Further, $Y =\emptyset$, since otherwise $F[\mathcal X \cup \{y\}]$ is $2$-coloured and thus contains a ${\mathcal K}^3$ (for all $y \in Y$). 

\begin{claim}\label{properties}
Let $\mathcal X= \{X_1, \dots , X_5\}$. The following properties hold:
\begin{itemize}
\item[$(\alpha _1)$] $(1-\eps_3)\frac{n_1}{5} \leq |X_i|\leq (1+\eps_3)\frac{n_1}{5}$ for all $1 \leq i \leq 5$;
\item[$(\alpha _2)$] $E(F)$ is $2$-coloured with green and blue and consists of a green $5$-cycle and a blue $5$-cycle. We may assume that $X_1X_2X_3X_4X_5X_1$ is a green cycle and $X_1X_3X_5X_2X_4X_1$ is a blue cycle;
\item[$(\alpha _3)$] Either the cliques in $\mathcal X$ are vertex-disjoint or there exists a unique vertex $w$ that lies
in each clique in $\mathcal X$ (and $w$ is the only vertex which lies is more than one clique in $\mathcal X$). 
\end{itemize}
\end{claim}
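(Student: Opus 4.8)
The plan is to establish the three properties fairly independently: $(\alpha_2)$ is essentially immediate from what is already in hand, $(\alpha_1)$ is a short counting-plus-stability argument, and $(\alpha_3)$ requires a propagation argument around the clique graph~$F$.

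For $(\alpha_2)$: by Claim~\ref{c:mono3}(b), for any two of the (all red) cliques $X_i,X_j$ the majority colour of the edges between them --- which is the colour of $X_iX_j$ in $F$ --- is neither red nor the colour of the cliques, so $E(F)$ uses only green and blue; by Claim~\ref{noX}, $F$ has no monochromatic triangle; and, as already used in the construction of ${\mathcal G}_n$, up to isomorphism the only $2$-colouring of $K_5$ without a monochromatic triangle consists of two complementary $5$-cycles. Relabelling the cliques, the green cycle may be taken to be $X_1X_2X_3X_4X_5X_1$, which forces the blue one to be $X_1X_3X_5X_2X_4X_1$. For $(\alpha_1)$: every $3$-subset of a clique $X_i$ spans a red triangle, and since $|V(X_i)\cap V(X_j)|\le 1$ for $i\ne j$ (Claim~\ref{c:mono3}(b)) the triangles obtained this way from distinct cliques are pairwise distinct, so $p({\mathcal K}^3,G')\ge\sum_{i=1}^5\binom{|X_i|}{3}/\binom{n_1}{3}$. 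As $Y=\emptyset$ we have $\sum_i|X_i|\ge n_1$, and if some $|X_j|$ were outside $[(1-\eps_3)n_1/5,(1+\eps_3)n_1/5]$ then bounding the contribution of the remaining four cliques from below by convexity of $a\mapsto\binom{a}{3}$ would give $p({\mathcal K}^3,G')\ge 0.04+c\eps_3^2$ for an absolute constant $c>0$, contradicting property (i), since $\eps_1\ll\eps_3^2$ by~(\ref{hier}). Hence $(\alpha_1)$ holds.

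For $(\alpha_3)$: if no vertex of $G'$ lies in two cliques of $\mathcal X$ these cliques are pairwise disjoint and we are done, so fix $w\in V(X_i)\cap V(X_j)$ for some $i\ne j$; the heart of the matter is to show $w\in V(X_k)$ for every $k$. Suppose $w\in V(X_a)$ and $X_b$ is a clique with $w\notin V(X_b)$. Since $X_a\cap X_b$ is either empty or a single vertex different from $w$, Claim~\ref{c:mono3}(b) shows that of the $|X_b|$ edges from $w$ to $V(X_b)$ all but at most one --- a red matching edge, or the edge to the shared vertex --- receive the colour $c(X_aX_b)$ of $X_aX_b$ in $F$. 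Consequently, if $w\in V(X_a)\cap V(X_b)$ but $w\notin V(X_c)$, then among the $|X_c|\ge 4$ edges from $w$ to $V(X_c)$ at least $|X_c|-1$ have colour $c(X_aX_c)$ and at least $|X_c|-1$ have colour $c(X_bX_c)$; since $|X_c|\ge 4$ these two families of edges must overlap, so $c(X_aX_c)=c(X_bX_c)$. Thus, to conclude $w\in V(X_c)$ it suffices to exhibit a clique $X_c\not\ni w$ with $c(X_aX_c)\ne c(X_bX_c)$. Here we invoke $(\alpha_2)$: we may assume $c(X_iX_j)$ is green (the blue case is symmetric), so $X_i,X_j$ are consecutive on the green $5$-cycle, and for any consecutive $P,Q$ on that cycle followed by $R$ the edge $QR$ is green (a cycle edge) while $PR$ is blue (a chord), so $c(PR)\ne c(QR)$; walking around the green $5$-cycle starting from the pair $\{X_i,X_j\}$ we pick up, one clique at a time, $w\in V(X_k)$ for all $k$. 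Finally, if some $w'\ne w$ also lay in two cliques, the same argument would place $w'$ in all five, giving $|V(X_1)\cap V(X_2)|\ge 2$ and contradicting Claim~\ref{c:mono3}(b); so $w$ is the unique vertex lying in more than one clique, and $(\alpha_3)$ follows.

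I expect the propagation step in $(\alpha_3)$ to be the main obstacle. One has to apply Claim~\ref{c:mono3}(b) uniformly in both of its forms (``shared vertex'' and ``disjoint, with a red matching'') and to check in every configuration that the overwhelming majority of the edges from $w$ into a clique $X_b$ really do carry the colour $c(X_aX_b)$, so that the single possible red edge cannot destroy the required overlap of the two edge families; once this is set up correctly the rest is pigeonhole together with the rigidity of the unique triangle-free $2$-colouring of $K_5$.
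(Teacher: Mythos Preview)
Your proof is correct and follows essentially the same approach as the paper's: the same convexity count for $(\alpha_1)$, the same appeal to the unique triangle-free $2$-colouring of $K_5$ for $(\alpha_2)$, and the same propagation idea for $(\alpha_3)$. The paper's treatment of $(\alpha_3)$ is slightly terser---fixing $w\in X_1\cap X_2$ and observing directly that every edge from $w$ to $X_3$ must be red (since $X_1X_3$ is blue and $X_2X_3$ is green in $F$), whence $w\in X_3$ by maximality---but your overlap-of-colour-families argument and explicit walk around the green $5$-cycle amount to the same thing, and your citation of Claim~\ref{c:mono3}(b) rather than Claim~\ref{c:mono2} is if anything the more precise reference.
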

\proof
Every clique in $\mathcal X$ contains at least $(1-\eps_3)\frac{n_1}{5}$ vertices as otherwise
\[
 p({\mathcal K}^3, G') \ge \left({(1-\eps_3)\frac{n_1}{5}\choose 3}+4{(1+\eps_3/4)\frac{n_1}{5}\choose 3}\right)\left/{n_1\choose 3}\right.
 \stackrel{(\ref{hier})}{>} 0.04+\eps_1.
\]
A similar calculation shows that every clique in $\mathcal X$ contains at most $(1+\eps_3)\frac{n_1}{5}$ vertices.
Every clique in $\mathcal X$ is red, thus $E(F)$ is $2$-coloured with green and blue. Since $F$ does not contain a monochromatic
triangle, $F$ must satisfy ($\alpha _2$). 

Suppose two of the cliques, say $X_1$ and $X_2$, share a vertex $w$. As $X_1X_3$ is blue and $X_2X_3$ is green, 
Claim~\ref{c:mono2} implies that,
for every vertex $v \in X_3$ the edge $vw\in E(G)$ can be neither blue nor green, so it has to be red. But this implies that $w\in X_3$. By similar arguments, $w\in X_4\cap X_5$. Thus, ($\alpha _3$) holds.
\endproof

\subsection{Obtaining structure in $G$ from $G'$} 
Our next task is to find a special set $V' \subseteq V(G)$ such that $G[V']$ has `similar' structure to $G_{ex} (n_2)$. 
\begin{claim}\label{cV'}
There exists a set $V' \subseteq V(G)$ such that the following properties hold:
\begin{itemize}
\item[$(\beta _1)$] $|V'|=n_2$;
\item[$(\beta _2)$] $V'$ has a partition into non-empty sets $C_1,C_2,C_3,C_4,C_5$ such that
\begin{itemize}
\item[$\bullet$] $\frac{|C_i|}{|C_j|}>1-\eps_4$ for all $1 \le i\ne  j \le 5 $,
\item[$\bullet$] all edges inside the $C_i$ have the same colour, say red,
\item[$\bullet$] all edges between $C_i$ and $C_{i+1}$ are green, 
\item[$\bullet$] all edges between $C_i$ and $C_{i+2}$ are blue (here indices are computed modulo $5$);
\end{itemize}
\item[$(\beta _3)$] If we uniformly at random choose two vertices $u,v\in V(G)$, then with probability greater than $1-\eps_5$, the set $V'\cup\{u,v\}$ satisfies~($\beta _2$) as well.
\end{itemize}
\end{claim}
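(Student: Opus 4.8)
The plan is to take an $\eps_1$-standard subgraph $G'$ of $G$ on $n_1$ vertices, use the structure obtained in Claim~\ref{properties} to identify a ``clean'' red-partitioned vertex set inside $G'$, and then show that almost all of $V(G)$ must attach to this structure in the prescribed way, so that deleting a bounded number of bad vertices and rebalancing yields the required $V'$ with $|V'|=n_2$. Concretely, first I would fix one $\eps_1$-standard $G'$ (which exists by Claim~\ref{cstan}) and pass to the five maximal red cliques $X_1,\dots,X_5$ satisfying $(\alpha_1)$--$(\alpha_3)$. If the $X_i$ share a common vertex $w$, delete $w$; this changes densities negligibly. After this we may assume the $X_i$ are vertex-disjoint and together they cover all but at most $5$ vertices of $G'$, with $E(F)$ the green/blue double $5$-cycle of $(\alpha_2)$. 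Now for each ordered pair $i\neq j$, Claim~\ref{c:mono2} and Claim~\ref{c:mono3} tell us that all but at most $O(1)$ edges between $X_i$ and $X_j$ have the colour prescribed by the clique graph (green if $|i-j|\equiv 1$, blue if $|i-j|\equiv 2$ mod $5$). Discarding from each $X_i$ the boundedly many vertices that send a ``wrong''-coloured edge to some other clique, we are left with sets $C_i^\circ\subseteq X_i$ with $|C_i^\circ|\ge (1-\eps_3)n_1/5 - O(1)$ such that every edge inside $C_i^\circ$ is red, every edge between $C_i^\circ$ and $C_{i+1}^\circ$ is green, and every edge between $C_i^\circ$ and $C_{i+2}^\circ$ is blue. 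Trimming the $C_i^\circ$ to a common size and keeping exactly $n_2/5$ from each (possible since $1/n_2 \ll \eps_3$ and $n_1$ is much larger than $n_2$) gives a set $V'$ of size $n_2$ satisfying $(\beta_1)$ and $(\beta_2)$ with ratio bound $1$, which is stronger than the required $1-\eps_4$.

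For $(\beta_3)$, the point is that a random pair $u,v\in V(G)$ should, with probability close to $1$, extend the partition $C_1,\dots,C_5$ of $V'$ to a partition of $V'\cup\{u,v\}$ of the same type. I would argue this by a counting/averaging argument: call a vertex $u\in V(G)$ \emph{good} (for this $V'$) if there is an index $i$ such that every edge from $u$ into $C_i$ is red, every edge from $u$ into $C_{i+1}$ and $C_{i-1}$ is green, and every edge from $u$ into $C_{i+2}$ and $C_{i-2}$ is blue; then adding any good $u$ to $C_i$ preserves $(\beta_2)$ (the size ratios change by $O(1/n_2)\ll\eps_4$). It remains to show that all but $\eps_5 n/2$ vertices of $G$ are good. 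Here I would use Proposition~\ref{prop:flag}(2): since $p({\mathcal K}^3,G)\le 0.04$ we have $p({\mathcal H},G)<\eps$, so the number of $4$-sets of $V(G)$ inducing an ${\mathcal H}$ is less than $\eps\binom{n}{4}$. A vertex $u$ that is not good forms, together with appropriately chosen triples inside two of the $C_i$'s (which are already monochromatic $K_3$'s or near-monochromatic), an induced copy of some $H\in{\mathcal H}$ — exactly as in the proofs of Claims~\ref{c:mono2} and \ref{c:mono3}, but now with the $C_i$ large. So each bad vertex contributes $\Omega(|C_i|^2)=\Omega(n_2^2)$ forbidden $4$-sets (using only vertices of $V'$, so they are genuinely distinct for distinct bad $u$); dividing $\eps\binom{n}{4}$ by this shows the number of bad vertices is at most roughly $\eps n^{2}/n_2^{2}\cdot(\text{const})\ll \eps_5 n$, using $\eps\ll 1/n_2\ll\eps_5$ from the hierarchy (\ref{hier}). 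Finally, if $u$ is bad but $v$ is good, or both are good but land in ``incompatible'' slots, one still needs that $V'\cup\{u,v\}$ satisfies $(\beta_2)$; since a good vertex has a \emph{unique} feasible slot $i$ (two distinct slots would force a vertex of $C_{i+1}$, say, to see both green and red from $u$ along with the red clique $C_{i+1}$, creating an ${\mathcal H}$), two good vertices can always be placed consistently, so the only failures come from $u$ or $v$ being bad, an event of probability at most $\eps_5$.

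The main obstacle I expect is the last quantitative step: carefully turning ``$u$ is bad'' into a guaranteed induced ${\mathcal H}$-copy using only vertices of the fixed set $V'$, and doing the bookkeeping so that distinct bad vertices yield disjoint-enough families of forbidden $4$-sets to make the union bound $\eps\binom{n}{4}$ effective. One has to check every way in which a vertex can fail to be good — sending a non-red edge inside some $C_i$, sending two differently-coloured edges to some $C_i$, or having a ``rotation-inconsistent'' colour pattern across the five classes — and in each case exhibit three further vertices of $V'$ completing an ${\mathcal H}(2,1,0)$, ${\mathcal H}(1,1,1)$, or ${\mathcal H}(0,2,1)$, just as in Claims~\ref{c:mono2}--\ref{c:mono3} but keeping track of how many such completions exist. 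The hierarchy (\ref{hier}) is arranged precisely so that these losses ($O(1)$ trimmed vertices per clique, $\eps n$ forbidden $4$-sets, $n_2\ll n_1\ll n$) are all absorbed, so the argument is routine once the case analysis is set up; I would organise it as a short auxiliary lemma ``all but $\eps_5 n$ vertices of $G$ are good for $V'$'' and then deduce $(\beta_3)$ by a union bound over the two random vertices.
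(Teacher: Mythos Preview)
Your construction of a candidate $V'$ satisfying $(\beta_1)$ and $(\beta_2)$ is broadly sound, though the claim that only ``boundedly many'' vertices of each $X_i$ send a wrong-coloured edge is not quite right: by Claim~\ref{c:mono3}(b) the red matching between two same-coloured cliques can have up to $\min(|X_i|,|X_j|)\approx n_1/5$ edges, so potentially every vertex of $X_i$ is incident with a wrong edge. This is easily repaired (a random $n_2/5$-subset of each $X_i$ avoids all matching edges with positive probability, since $n_2^2/n_1$ is small by the hierarchy), so I would not call it a gap.

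The argument for $(\beta_3)$, however, does not work. Your bound on bad vertices comes from dividing the total number of $\mathcal H$-copies in $G$, which is at most $\eps\binom{n}{4}\approx \eps n^4/24$, by the $\Omega(n_2^2)$ copies each bad vertex creates inside $V'\cup\{u\}$. That gives at most $O(\eps n^4/n_2^2)$ bad vertices, not $\eps n^2/n_2^2$; and since $n_2$ is a \emph{fixed constant} while $n\to\infty$, the bound $\eps n^4/n_2^2$ is hopelessly large compared to $\eps_5 n$. The underlying problem is structural: the global density bound $p(\mathcal H,G)<\eps$ says nothing about $4$-sets that are forced to have three vertices in a specific set of constant size. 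A single fixed $V'$ simply cannot see enough of $G$ for this counting to bite.

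The paper sidesteps this by \emph{not} fixing $V'$ in advance. It first shows that a uniformly random $n_2$-subset of $V(G)$ satisfies $(\beta_2)$ with probability $>1-\eps_4^2$ (by sampling an $n_1$-set, which is $\eps_1$-standard with probability $\ge 1-\eps_2$ by Claim~\ref{cstan}, and then subsampling $n_2$ vertices from it), and similarly for random $(n_2+2)$-subsets. One then averages over all pairs $(V',\{u,v\})$: since both $V'$ and $V'\cup\{u,v\}$ satisfy $(\beta_2)$ for all but a $3\eps_4^2\ll\eps_5$ fraction of such pairs, there must exist a specific $V'$ for which the fraction of bad $\{u,v\}$ is below $\eps_5$. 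This double-sampling/averaging is the missing idea; once you have it, $(\beta_3)$ follows in a few lines without any case analysis on how a vertex can be ``bad''.
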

\proof
Consider any $\eps_1$-standard subgraph $G'$ of $G$ on $n_1$ vertices. Randomly select a set $W \subseteq V(G')$ of
size $n_2$. Then with probability more than $1-\eps  _4 ^3$, $W$ satisfies ($\beta _2$). This follows from 
Claims~\ref{c:mono3}~and~\ref{properties}. For example, by applying a Chernoff-type bound for the
hypergeometric distribution (see e.g.~\cite[Theorem 2.10]{Janson&Luczak&Rucinski00}),
($\alpha _1$) implies that with probability greater than $1- \eps _4 ^4 $, the first two conditions in
($\beta _2$) hold.
Further, note that the probability that $W$ contains the special vertex $w$ from ($\alpha _3$) (if it exists) is
$n_2 /n_1 \ll \eps _4$ by (\ref{hier}).
 
Randomly select a set $W' \subseteq V(G)$ of size $n_2$. One can view this procedure as first randomly selecting a set $W''
\subseteq V(G)$ of size $n_1$, then randomly selecting a set $W' \subseteq W''$ of size $n_2$. By Claim~\ref{cstan},
with probability at least $1- \eps_2$, $G[W'']$ is $\eps_1$-standard.

Together, this implies that with probability greater than $(1- \eps _2)(1-\eps _4^3)> 1- \eps  _4^2$ 
a randomly chosen set $W' \subseteq V(G)$ of size $n_2$ satisfies ($\beta _2$). Similarly, 
with probability greater than $ 1- \eps  _4 ^2$ 
a randomly chosen set $W' \subseteq V(G)$ of size $n_2+2$ satisfies ($\beta _2$).

Consider all pairs $(V', \{u,v\})$ such that $\{u,v\}, V' \subseteq V(G)$ and $|V'|=n_2$.
(Note here we allow for $V' \cap \{u,v\}\not = \emptyset$.) With probability greater than
$1-3\eps _4^2 $, a randomly selected such pair $(V', \{u,v\})$ has the property
that both $V'$ and $V' \cup \{u,v \}$ are $\eps_1$-standard. Since $3\eps _4^2 \ll \eps _5$, this implies that
there exists a set $V' \subseteq V(G)$ satisfying ($\beta_1$)--($\beta _3$).
\endproof

Let $V'$ be as in Claim~\ref{cV'}. Set
\[
 E_0:=\{uv\in E(G): V'\cup\{u,v\}\mbox{ does not satisfy~($\beta _2$)}\}.
\]
Then $|E_0|<\eps_5 n^2$ by ($\beta _3$). Let 
\[
 V_0:=\{v\in V(G): v\mbox{ is incident to at least $\eps_6 n$ edges in }E_0\}.
\]
Then $|V_0|<\eps_6 n$ since $\eps _5 \ll \eps _6$. For each $1 \leq i \leq 5$ define
\[
 F_i:=\{ v\in V(G)\setminus V_0:vw\mbox{ is red for all }w\in C_i\}.
\]
Note that $V(G)=V_0\cup F_1\cup F_2\cup F_3\cup F_4\cup F_5$.
Further, notice that the $F_i$ are disjoint. (Indeed, if there is a vertex $x \in F_i \cap F_j$ for some $i \not = j$ 
then all edges incident to $x$ are in $E_0$. But then $x \in V_0$, a contradiction.)
\begin{claim}\label{Fibound}
For all $1 \leq i \leq 5$,
\begin{align*}
(1-\eps _7)n/5 \leq |F_i| \leq (1+\eps _7)n/5.
\end{align*}
\end{claim}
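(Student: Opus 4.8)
The plan is to show that each $F_i$ has size very close to $n/5$, using the structure established so far on $V'$ together with the fact that almost every pair of vertices behaves ``consistently'' with $V'$. First I would fix $1 \le i \le 5$ and count, for each $v \in V(G) \setminus V_0$, the number of vertices $w \in C_i$ with $vw$ red. Since $v \notin V_0$, the vertex $v$ is incident to fewer than $\eps_6 n$ edges of $E_0$, so for all but at most $\eps_6 n$ vertices $u \in V'$, the set $V' \cup \{v, u\}$ satisfies ($\beta_2$); in particular, for all but at most $\eps_6 n$ vertices $w \in C_i$, the pair $(V', \{v,w\})$ extends to a configuration satisfying ($\beta_2$). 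In such a configuration $v$ must be placed into exactly one of the five parts of the partition of $V' \cup \{v,w\}$, and the colour of $vw$ is then determined by which parts $v$ and $w$ lie in. Thus $v$ sends red edges to $C_i$ (minus the few exceptional vertices) if and only if $v$ is placed in the part containing $C_i$, and otherwise sends essentially all green or essentially all blue edges to $C_i$. This shows that for each $v \notin V_0$ there is a well-defined index $j = j(v) \in [5]$ such that $v$ sends red edges to all but at most $\eps_6 n$ vertices of $C_j$, green edges to all but at most $\eps_6 n$ vertices of the two ``green-neighbouring'' classes, and blue edges to all but at most $\eps_6 n$ vertices of the other two; and then $v \in F_j$ by the definition of $F_j$ (using $|C_j| \approx n_2/5 \gg \eps_6 n$ so that ``red to all of $C_j$'' really does hold, since $C_j \subseteq V'$ has size exactly $|C_j| > \eps_6 n$ — wait, this needs care, see below).

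The main obstacle, and the point that needs the most care, is exactly this last step: the definition of $F_i$ demands that $vw$ is red for \emph{all} $w \in C_i$, not merely for all but $\eps_6 n$ of them, whereas the argument above only controls all-but-$\eps_6 n$ edges. To bridge this I would argue as follows. Since $|C_j| < n_2 \ll \eps_6 n$ — no: in fact $|C_j| \approx n_2/5$ and $n_2 \ll n$, so $|C_j| < \eps_6 n$ provided the hierarchy gives $n_2/5 < \eps_6 n$, which holds since $1/n_2 \gg \eps_3 \gg 1/n_2$... let me restate: we have $1/n_2 \ll \eps_4$ and $\eps_6$ is much larger, and $n_2$ is a fixed constant while $n \to \infty$, so indeed $n_2 < \eps_6 n$ for $n$ large. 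Hence the ``at most $\eps_6 n$ exceptional vertices of $C_j$'' claim is vacuous relative to $|C_j|$ only if $\eps_6 n \ge |C_j|$, which is true, so it gives no information — this is backwards. The correct resolution is to note that $V' \cup \{v\}$ itself (a set of size $n_2 + 1 \le n_1$, contained in a random-enough set) almost surely lies in an $\eps_1$-standard subgraph, so by Claims~\ref{c:mono3} and~\ref{properties} applied there, $v$ is forced into exactly one class, giving that $vw$ is red for \emph{all} $w$ in one $C_j$ and no exceptions. More precisely: ($\beta_3$) with a single vertex $u=v$ (take $u=v$) gives that for all but $\eps_5 n$ choices, $V' \cup \{v\}$ satisfies ($\beta_2$), and a vertex $v \notin V_0$ can be shown to always be such a good vertex since otherwise $v$ would be incident to too many $E_0$-edges.

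So the cleaner route I would actually take: for $v \notin V_0$, since $v$ lies in fewer than $\eps_6 n$ edges of $E_0$ and $|V'| = n_2 \ll \eps_6 n$ is false — rather, we need: the number of $u \in V'$ with $uv \in E_0$ is at most $\eps_6 n$, but we want it to be $0$ or small relative to $n_2$. Here is the fix that works: the definition of $V_0$ uses the \emph{whole} vertex set, not just $V'$, and $|E_0| < \eps_5 n^2$, so a vertex not in $V_0$ has at most $\eps_6 n$ bad edges total; but $n_2$ is a constant and $\eps_6 n \gg n_2$, so this does not preclude all of $V' \cap N(v)$ being bad. The genuinely correct argument is to work inside a fresh random $\eps_1$-standard subgraph containing $V' \cup \{v\}$: by ($\beta_3$)-type reasoning (extending Claim~\ref{cV'} to allow adding $v$), almost every $v$ has $V' \cup \{v\}$ satisfying ($\beta_2$), forcing $v$ into a unique $C_j$ with \emph{no} exceptions, hence $v \in F_j$; and any $v$ for which this fails must lie in $V_0$. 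This partitions $V(G) \setminus V_0$ cleanly as $\bigsqcup_j F_j$. Finally, I would show each $|F_i|$ is large and small: if some $|F_i| < (1-\eps_7)n/5$ then, since $|V_0| < \eps_6 n \ll \eps_7 n$ and the $F_j$ partition $V(G) \setminus V_0$, some other $|F_{i'}| > (1+\eps_7/5)n/5$; placing the vertices of $F_{i'}$ (which see each other via red, green, or blue according to the cyclic pattern) one checks $p(\mathcal{K}^3, G) \ge \binom{|F_{i'}|}{3}/\binom{n}{3} + (\text{contributions from the other four classes})$, which by the hierarchy exceeds $0.04 + \eps/2 > p(\mathcal{K}^3,G)$, a contradiction; here the point is that the extremal density $0.04$ is achieved only by the balanced five-class configuration, and imbalance strictly increases the count of monochromatic triangles (all of which are red) by a margin dwarfing $\eps$. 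The upper bound $|F_i| \le (1+\eps_7)n/5$ then follows since the five sizes sum to $n - |V_0| \le n$ and each is at least $(1-\eps_7)n/5$. The main delicate point throughout remains converting ``almost all edges'' statements on the fixed small set $V'$ into ``all edges'' statements needed for membership in $F_i$, which is why one must pass through a random $\eps_1$-standard host subgraph rather than argue directly on $V'$.
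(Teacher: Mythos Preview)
Your proposal ultimately lands on the same argument as the paper --- an imbalance among the five $F_j$ forces, by convexity of $x\mapsto\binom{x}{3}$, strictly more than $(0.04+\eps_1)\binom{n}{3}$ red triangles, contradicting minimality --- but most of your text is spent on something that is \emph{not} the content of the claim. The fact that $V(G)\setminus V_0 = F_1\sqcup\cdots\sqcup F_5$ is already recorded in the paper immediately \emph{before} the claim, and your long back-and-forth about ``all $w\in C_i$'' versus ``almost all $w\in C_i$'' is unnecessary once you see the one-line justification: if $V'\cup\{v\}$ fails $(\beta_2)$ then so does $V'\cup\{u,v\}\supseteq V'\cup\{v\}$ for \emph{every} $u$ (the property is inherited by supersets' failure via restriction), hence every edge at $v$ lies in $E_0$ and $v\in V_0$; contrapositively, $v\notin V_0$ implies $V'\cup\{v\}$ satisfies $(\beta_2)$, so $v$ joins exactly one class and $vw$ is red for \emph{all} $w\in C_j$. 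You gesture at this (``any $v$ for which this fails must lie in $V_0$'') but never give the hereditary step that makes it work.

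There is one genuine gap in your size argument. To convert $\sum_j\binom{|F_j|}{3}$ into a lower bound on monochromatic triangles you need that almost every edge inside each $F_j$ is red. The paper supplies this explicitly: any non-red edge $uv$ inside $F_j$ must lie in $E_0$ (since if $uv\notin E_0$ then $V'\cup\{u,v\}$ satisfies $(\beta_2)$, forcing $u,v$ into the $j$-th class and $uv$ red), so there are at most $|E_0|<\eps_5 n^2$ such edges and hence at most $\eps_5 n^3$ non-red triples inside $F_j$. Your parenthetical ``which see each other via red, green, or blue according to the cyclic pattern'' assumes this without proof. With that ingredient in place, the paper's computation is exactly your sketch: if $|F_i|<(1-\eps_7)n/5$ then
\[
\binom{|F_i|}{3}+4\binom{|V(G)\setminus(V_0\cup F_i)|/4}{3}-5\eps_5 n^3 \;>\;(0.04+\eps_1)\binom{n}{3},
\]
a contradiction; the upper bound is symmetric (or, as you observe, follows from the lower bounds and $\sum_j|F_j|\le n$).
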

\proof
Suppose $|F_i|< (1-\eps _7)n/5$ for some $1 \leq i \leq 5$. By definition of the $F_j$ and ($\beta _3$), there are at most
$\eps _5 n^2$ edges in $F_j$ that are not red (for each $1 \leq j \leq 5$). Thus, in each $F_j$, there are at most $\eps _5 n^3$ triples that do not form a red triangle. Hence, there are at least
\begin{align*}
\binom{|F_i|}{3} +4\binom{|V(G) \backslash (V_0 \cup F_i)|/4}{3} -5\eps_5 n^3 & \geq  \binom{(1-\eps _7)n/5}{3}+4 \binom{(4/5+\eps _7/5 -\eps _6)n/4}{3}-5\eps_5 n^3 \\ & \stackrel{(\ref{hier})}{>} (0.04+\eps_1)\binom{n}{3}
\end{align*}
red triangles in $G$, a contradiction. The upper bound  follows similarly.
\endproof

For each $v\in V(G)$ and $1\le i\le 5$, let $r_i(v):=|N_{red}(v)\cap F_i|$, $b_i(v):=|N_{blue}(v)\cap F_i|$ and
 $g_i(v):=|N_{green}(v)\cap F_i|$. On the basis of these quantities, we define another partition of $V(G)$ as follows. For each
$1 \leq i \leq 5$, set
\[
 V_i:=\left\{v\in V(G):\begin{array}{rl}
                        r_i(v)&\ge 0.199n,\\
                        g_{i+1}(v)&\ge 0.199n,\\
                        b_{i+2}(v)&\ge 0.199n,\\
                        b_{i+3}(v)&\ge 0.199n,\mbox{ and}\\
                        g_{i+4}(v)&\ge 0.199n
                       \end{array}
\right\}.
\]
\begin{claim}\label{csub}
For each $1 \leq i \leq 5$, $F_i \subseteq V_i$. 
\end{claim}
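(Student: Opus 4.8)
The plan is to show that any vertex $v\in F_i$ satisfies all five defining inequalities of $V_i$. The starting point is that $F_i$ itself is large, namely $|F_j|\ge(1-\eps_7)n/5>0.199n$ for every $j$ by Claim~\ref{Fibound} (since $\eps_7\ll 1$). So it suffices to show that, from the point of view of $v$, the edges from $v$ to $F_i$ are almost all red, the edges from $v$ to $F_{i+1}$ and $F_{i+4}$ are almost all green, and the edges from $v$ to $F_{i+2}$ and $F_{i+3}$ are almost all blue; a small linear error term is harmless because $|F_j|>0.199n+\Omega(n)$.

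First I would record what $v\in F_i$ gives us directly: by definition of $F_i$, all edges $vw$ with $w\in C_i$ are red, and $v\notin V_0$, so $v$ is incident to fewer than $\eps_6 n$ edges of $E_0$. I would then use property ($\beta_3$): for all but at most $\eps_5n^2$ pairs $\{u,w\}$ the set $V'\cup\{u,w\}$ still satisfies ($\beta_2$). The idea is to fix $v$ and a typical vertex $u\in F_j$ (for the appropriate $j$), with neither $vu$, $vw$ ($w\in C_i$), nor $uw'$ ($w'\in C_j$) lying in $E_0$, so that $V'\cup\{v,u\}$ inherits the $\mathcal{G}$-type structure of ($\beta_2$). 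In that enlarged configuration $v$ plays the role of a vertex attached to $C_i$ by red edges, hence (by the rigidity of the structure, i.e.\ the colour pattern forced on edges from $v$ to $C_{i+1}$, $C_{i+2}$, etc.) the edge $vu$ must have the prescribed colour: green if $u$ is in the class that ($\beta_2$) attaches to $C_i$ by green edges, and so on. Since $|E_0|<\eps_5n^2$ and $v$ meets at most $\eps_6n$ edges of $E_0$, only $O(\eps_6 n)$ vertices $u$ can fail to be ``typical'' in this sense, so at least $|F_j|-O(\eps_6 n)\ge 0.199n$ of the edges from $v$ to each relevant $F_j$ have the right colour.

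The one point requiring a little care is to nail down exactly which class of the partition $C_1,\dots,C_5$ a vertex $u\in F_j$ sits in, and to make sure the index arithmetic matches the $i\mapsto i{+}1, i{+}2, i{+}3, i{+}4$ pattern in the definition of $V_i$. This comes from the explicit description in ($\beta_2$): edges inside each $C_\ell$ are red, edges between $C_\ell$ and $C_{\ell+1}$ are green, and edges between $C_\ell$ and $C_{\ell+2}$ are blue, with indices mod $5$. A vertex $u\in F_j$ is joined to all of $C_j$ by red edges, so in the structure on $V'\cup\{v,u\}$ it behaves like a vertex of $C_j$; thus the edge $vu$ gets the colour that ($\beta_2$) assigns between $C_i$ and $C_j$, which is red if $j=i$, green if $j\in\{i+1,i+4\}$, and blue if $j\in\{i+2,i+3\}$ — exactly the five cases appearing in the definition of $V_i$. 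I expect the main obstacle to be this bookkeeping: one must verify that adding two vertices (rather than one) to $V'$ still forces the structure, i.e.\ that $V'\cup\{v,u\}$ satisfying ($\beta_2$) genuinely pins down the colour of $vu$, and that the ``bad'' pairs excluded by $E_0$ and by $v\in V_0^c$ account for only a $o(n)$ fraction, well below the slack between $0.199n$ and $(1-\eps_7)n/5$. Once that is in place, summing the error bounds and invoking $\eps_5\ll\eps_6\ll\eps_7\ll 1$ from (\ref{hier}) finishes the claim.
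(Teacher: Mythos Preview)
Your approach is correct and is essentially the same as the paper's: for $v\in F_i$ you use that $v\notin V_0$ (hence $v$ meets fewer than $\eps_6 n$ edges of $E_0$), so for all but at most $\eps_6 n$ vertices $u\in F_j$ the edge $vu$ lies outside $E_0$, whence $V'\cup\{v,u\}$ satisfies $(\beta_2)$ and the colour of $vu$ is forced by the classes containing $C_i$ and $C_j$; combined with Claim~\ref{Fibound} this gives each of the five lower bounds defining $V_i$.

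Two small remarks. First, the extra conditions you impose (``neither $vw$ nor $uw'$ in $E_0$'') are unnecessary: the \emph{only} relevant condition is $vu\notin E_0$, since by definition this alone guarantees that $V'\cup\{v,u\}$ satisfies $(\beta_2)$. Second, the ``bookkeeping obstacle'' you anticipate is genuine but easy: once $V'\cup\{v,u\}$ satisfies $(\beta_2)$, the partition of $V'$ into red cliques is uniquely $C_1,\dots,C_5$ (each $C_\ell$ has size $\approx n_2/5\ge 2$), and since $v$ sends only red edges to $C_i$ it must lie in the part containing $C_i$, and likewise $u$ lies in the part containing $C_j$; this pins down the colour of $vu$ as claimed. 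The paper's proof is simply a terse version of exactly this argument.
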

\proof
Given any $v \in F_i$, $v$ is incident to at most $\eps _6 n$ edges in $E_0$. Thus, there are at most $\eps _6 n$ vertices
in $F_i$ that $v$ does not send a red edge to. Hence, Claim~\ref{Fibound} implies that $r_i (v) \geq 0.199n$. Similar
arguments give $g_{i+1}(v), b_{i+2}(v),  b_{i+3}(v), g_{i+4}(v) \geq 0.199n$.
\endproof
Set $V^*:=V(G)\setminus (V_1\cup V_2\cup V_3\cup V_4\cup V_5)$. Let $E^*$ be the set of edges $xy$ in $G[V_1\cup V_2\cup V_3\cup V_4\cup V_5]$ such that $x \in V_i$ and $y \in V_j$ for some $1 \leq i , j \leq 5$ and so that the colour of $xy$ differs from that of the
edges between $C_i$ and $C_j$. 

\begin{claim}\label{c:Vred}
For each $1 \leq i \leq 5$, $G[V_i]$ is a red clique.
\end{claim}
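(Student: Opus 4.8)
The plan is to argue by contradiction from the minimality of $G$, using a single-edge recolouring. Suppose some edge $xy$ of $G$ with $x,y\in V_i$ is not red; since blue and green play symmetric roles we may assume $xy$ is green. Let $G^-$ be the $3$-coloured complete graph obtained from $G$ by recolouring $xy$ red. The only monochromatic triangles affected by this change are those through $xy$: in $G$ these are precisely the green triangles $xyz$ with $z\in N_{green}(x)\cap N_{green}(y)$, while in $G^-$ they are precisely the red triangles $xyz$ with $z\in N_{red}(x)\cap N_{red}(y)$. Consequently $G^-$ has exactly $|N_{red}(x)\cap N_{red}(y)|-|N_{green}(x)\cap N_{green}(y)|$ more monochromatic triangles than $G$, and the goal is to show this quantity is negative, contradicting the minimality of $G$ (which minimises $p(\mathcal K^3,\cdot)$, equivalently the number of monochromatic triangles, over all $3$-coloured copies of $K_n$).

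For the estimates I would use only the defining inequalities of $V_i$ together with Claim~\ref{Fibound}. Since $x,y\in V_i$ we have $g_{i+1}(x),g_{i+1}(y)\ge 0.199n$, so $|N_{green}(x)\cap N_{green}(y)\cap F_{i+1}|\ge 2\cdot 0.199n-|F_{i+1}|\ge 2\cdot 0.199n-(1+\eps_7)n/5>0.197n$ by Claim~\ref{Fibound}, and the same bound holds with $F_{i+4}$ in place of $F_{i+1}$; as $F_{i+1}$ and $F_{i+4}$ are disjoint this gives $|N_{green}(x)\cap N_{green}(y)|>0.394n$. The identical computation with the colour blue and the classes $F_{i+2},F_{i+3}$ yields $|N_{blue}(x)\cap N_{blue}(y)|>0.394n$. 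Since the common red, common green, and common blue neighbourhoods of $x$ and $y$ are pairwise disjoint subsets of $V(G)\setminus\{x,y\}$, it follows that $|N_{red}(x)\cap N_{red}(y)|<n-0.788n=0.212n$. Hence $G^-$ has fewer than $0.212n-0.394n<0$ additional monochromatic triangles, i.e.\ strictly fewer than $G$, the desired contradiction. The case where $xy$ is blue is handled in the same way, recolouring $xy$ red and comparing $N_{red}(x)\cap N_{red}(y)$ with $N_{blue}(x)\cap N_{blue}(y)$.

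The only real decision here is to exploit the \emph{global} minimality of $G$ via a one-edge recolouring rather than to hunt for a forbidden configuration from $\mathcal H$ inside $G$: a single non-red edge in $V_i$ spawns only $\Theta(n^2)$ copies of an $\mathcal H(2,1,0)$, which is far short of the $\Theta(\eps n^4)$ copies one would need to contradict $p(\mathcal H,G)<\eps$, so the $\mathcal H$-route cannot eliminate an individual bad edge, whereas the recolouring argument is exact. The reason the arithmetic closes is that the threshold $0.199$ in the definition of $V_i$ lies just below $1/5$: a vertex of $V_i$ therefore sends $(1/5-o(1))n$ edges of the prescribed colour into each of the four non-red classes, and the two ``green'' conditions (resp.\ the two ``blue'' conditions) force more than $(2/5-o(1))n$ common green (resp.\ blue) neighbours for any two vertices of $V_i$, which leaves strictly fewer vertices available as common red neighbours than there are common green neighbours. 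Everything else is a routine estimate using the hierarchy~(\ref{hier}).
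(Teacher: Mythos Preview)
Your proof is correct and uses the same approach as the paper: recolour a non-red edge in $V_i$ to red and show, via the minimality of $G$, that more monochromatic triangles are destroyed than created. The only difference is bookkeeping---the paper bounds $|N_{red}(x)\cap N_{red}(y)|$ directly by $|V^*|+|V_i|+o(n)<0.205n$ (using that a vertex of $V_i$ has almost no red neighbours in the other $F_j$), whereas you obtain the bound $0.212n$ indirectly by subtracting the common green and blue neighbourhoods from $n$; both routes yield the same contradiction.
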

\begin{proof}
Claims~\ref{Fibound} and~\ref{csub} imply that $(1-\eps_{7})n/5 \leq|V_i| \leq(1+4\eps_{7})n/5$ for all $1 \leq i \leq 5$.
 Suppose for a contradiction that there is a blue edge $vw$ with $v,w\in V_1$. Recolouring $vw$ red creates at most 
\[
 |V^*|+|V_1|+(0.004+\eps_{7}/5) n < 0.205n
\]
 new red triangles. (The $(0.004+\eps_{7}/5) n$ term counts the maximum number of red edges a vertex in $V_1$ can send to
 $V_2\cup V_3\cup V_4\cup V_5$.)
 On the other hand, the recolouring destroys at least 
 $$|F_3|+|F_4|-2(0.002+2\eps_{7}/5)n >0.395n$$
 blue triangles, contradicting the minimality of  $G$.
\end{proof}

\begin{claim}\label{E*}
$E^* \subseteq E_0$.
\end{claim}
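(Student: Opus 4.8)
The claim $E^* \subseteq E_0$ asserts that any edge whose colour deviates from the intended five-partite green/blue pattern must already have been recorded as a ``bad'' edge relative to $V'$. The plan is to fix an arbitrary edge $xy \in E^*$, say with $x \in V_i$ and $y \in V_j$, and suppose toward a contradiction that $xy \notin E_0$, i.e.\ that $V' \cup \{x, y\}$ does satisfy property~($\beta_2$). Since $xy \notin E_0$, the pair $\{x,y\}$ fits into the five-partite colour pattern built on $C_1, \dots, C_5$; in particular $x$ must lie in one of the classes $C_k$ of the partition of $V' \cup \{x,y\}$ and $y$ in another (or the same), and the colour of $xy$ is forced to be the pattern colour between those two classes. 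I would then argue that the class containing $x$ must be ``$C_i$'' and the class containing $y$ must be ``$C_j$'' — this is the crux.

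To pin down which class $x$ belongs to, I would use the defining property of $V_i$: a vertex $v \in V_i$ sends at least $0.199n$ red edges to $F_i$, at least $0.199n$ green edges to $F_{i+1}$, at least $0.199n$ blue edges to $F_{i+2}$, and so on. Since each $|F_k|$ is close to $n/5$ by Claim~\ref{Fibound}, and since only $\eps_5 n^2$ edges total are outside the pattern relative to $V'$ (by ($\beta_3$)), the sets $F_k$ and $C_k$ must be ``aligned'': all but at most $\eps_6 n$ vertices of $C_k$ lie in $F_k$, because a vertex of $C_k$ outside $\bigcup F_\ell$ either lies in $V_0$ (at most $\eps_6 n$ such vertices) or — and here one checks the colour constraints — forces a contradiction with the pattern on $V'$. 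Consequently $x \in V_i$ sends $\approx 0.199n$ red edges into $F_i$, hence into $C_i$ minus a small exceptional set; if $x$ were placed in class $C_k$ with $k \ne i$ in the partition of $V' \cup \{x,y\}$, then by the pattern, edges from $x$ to $C_i \cap V'$ would have to be green or blue (whatever the pattern colour between $C_k$ and $C_i$ is), contradicting that $x$ sends $0.199n \gg \eps_6 n$ red edges there. So $x$ is placed in ``$C_i$'', and symmetrically $y$ is placed in ``$C_j$''. But then the pattern forces the colour of $xy$ to equal the pattern colour between $C_i$ and $C_j$, contradicting $xy \in E^*$.

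The main obstacle is the alignment step: establishing rigorously that, up to $O(\eps_6 n)$ exceptional vertices, the class $C_k$ of $V'$ sits inside $F_k$, and more importantly that the five classes in the partition of $V' \cup \{x,y\}$ guaranteed by ($\beta_2$) use the \emph{same} labelling (up to the rotational/reflectional symmetry of the 5-cycle pattern) as the original $C_1, \dots, C_5$. One must rule out the possibility that adding $x$ and $y$ reshuffles which class is which. This is handled by noting $|C_k| \ge (1 - \eps_4)|C_\ell|$ so each class is linear in size, and that $V'$ itself already carries the pattern, so the partition of $V' \cup \{x,y\}$ restricted to $V'$ differs from $C_1, \dots, C_5$ only by moving vertices whose edges to the rest of $V'$ are miscoloured — but there are none, since $V'$ satisfies ($\beta_2$) exactly. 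Hence the restricted partition coincides with $C_1, \dots, C_5$ (as the pattern admits a unique such partition of a large enough vertex set), and $x, y$ are simply appended to existing classes, which is exactly what the argument above needs. The remaining colour-counting inequalities (e.g.\ $0.199n > \eps_6 n + |V^*| + \eps_5 n$, which follows from~(\ref{hier})) are routine.
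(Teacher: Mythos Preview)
Your overall strategy is right and matches the paper: assume $V'\cup\{x,y\}$ satisfies $(\beta_2)$, argue that its five-class partition restricts to exactly $C_1,\dots,C_5$ on $V'$ (your uniqueness paragraph is fine), then show $x$ must sit in the class extending $C_i$ and $y$ in the one extending $C_j$, forcing $xy$ to carry the pattern colour --- contradiction.

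The gap is in how you pin $x$ to class $i$. You write that $x\in V_i$ sends $\approx 0.199n$ red edges into $F_i$, ``hence into $C_i$ minus a small exceptional set''. But recall $C_i\subseteq V'$ and $|V'|=n_2$, where $n_2$ is a \emph{fixed constant} in the hierarchy ($\eps_3\ll 1/n_2\ll \eps_4$), independent of $n$. So $|C_i|\approx n_2/5$ is $O(1)$, while $F_i$ has $\approx n/5$ vertices. The $0.199n$ red edges from $x$ land in $F_i$ and may miss the tiny set $C_i$ altogether; likewise ``all but $\eps_6 n$ vertices of $C_k$ lie in $F_k$'' is vacuous since $\eps_6 n\gg |C_k|$. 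The counting built into the definition of $V_i$ controls behaviour toward the large sets $F_k$, not toward the $C_k$, so your argument does not place $x$.

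The paper closes this gap with Claim~\ref{c:Vred}, proved immediately before: each $G[V_m]$ is a red clique. One checks $C_i\subseteq F_i\subseteq V_i$ (vertices of $V'$ lie outside $V_0$, and then Claim~\ref{csub} applies). Since $x\in V_i$ and $C_i\subseteq V_i$, \emph{every} edge from $x$ to $C_i$ is red; in any $(\beta_2)$ structure red edges occur only within a class, so even a single such red edge forces $x$ into the class extending $C_i$. This replaces your counting step and reduces the proof to two lines.
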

\proof
Suppose $xy \in E^*$ where $x \in V_i$ and $y \in V_j$ for some $1 \leq i,j \leq5$. 
The colour of $xy$ differs from that of the edges between $C_i$ and $C_j$. 
But Claim~\ref{c:Vred} implies that $x$  only sends red edges to $C_i$ and $y$ only sends 
 red edges to $C_j$. Thus, $xy \in E_0$.
\endproof

\begin{claim}\label{c:V*}
 $V^*=\emptyset$.
\end{claim}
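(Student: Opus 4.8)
The plan is to argue by contradiction: assuming some $v\in V^*$, I would exhibit a recolouring of the edges incident to $v$ that strictly decreases the number of monochromatic triangles, contradicting the minimality of $G$. The point to keep in mind throughout is that recolouring only the edges at $v$ changes exactly the monochromatic triangles through $v$; so if $t(v)$ denotes their number in $G$, then minimality of $G$ forces $t(v)$ to be at most the corresponding count in any recolouring of the star at $v$.

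First I would assemble the structure already proved. Since $V(G)=V_0\cup F_1\cup\dots\cup F_5$ and $F_i\subseteq V_i$, we have $V^*\subseteq V_0$, so $|V^*|<\eps_6n$; moreover $(1-\eps_7)n/5\le|V_i|\le(1+4\eps_7)n/5$ (proof of Claim~\ref{c:Vred}), each $F_i$ spans at most $\eps_5n^2$ non-red edges (proof of Claim~\ref{Fibound}), and, by Claim~\ref{E*} together with $|E_0|<\eps_5n^2$, all but at most $\eps_5n^2$ of the edges between any two distinct $F_i,F_j$ carry the colour prescribed by $(\beta_2)$ --- green when $i-j\equiv\pm1\pmod5$, blue when $i-j\equiv\pm2$. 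Write $r_i,g_i,b_i$ for $r_i(v),g_i(v),b_i(v)$, so $r_i+g_i+b_i=|F_i|$ (as $v\notin F_i$). Classifying a monochromatic triangle through $v$ by whether its other two vertices lie in one $F_i$ (colour then forced to red) or in two distinct classes (colour then forced to that prescribed between the classes), one obtains $t(v)=Q(v)+O(\eps_6n^2)$, where, with indices modulo $5$,
\[
 Q(v):=\sum_{i=1}^{5}\binom{r_i}{2}+\sum_{i=1}^{5}g_ig_{i+1}+\sum_{i=1}^{5}b_ib_{i+2}.
\]
Next, recolour the star at $v$ so that $v$ is red to $V_1\cup(V_0\setminus\{v\})$, green to $V_2\cup V_5$, and blue to $V_3\cup V_4$. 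Since each $G[V_i]$ is a red clique and the colour prescribed between $V_2$ and $V_5$ (resp.\ between $V_3$ and $V_4$) is blue (resp.\ green), the only substantial contribution is from the $\binom{|V_1|}{2}$ red triangles with both remaining vertices in $V_1$, so this recolouring has at most $\binom{|V_1|}{2}+O(\eps_6n^2)\le n^2/50+O(\eps_7n^2)$ monochromatic triangles through $v$. Hence, by minimality of $G$,
\[
 Q(v)\le\frac{n^2}{50}+O(\eps_7n^2).
\]

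The heart of the matter --- and the step I expect to be the main obstacle --- is a \emph{stability} statement: there is an absolute constant $c_0>0$ such that whenever $r_i+g_i+b_i\in[(1-\eps_7)n/5,(1+\eps_7)n/5]$ for all $i$ and $v$ lies in no $V_j$ (i.e.\ for each $j$ at least one of $r_j,g_{j+1},b_{j+2},b_{j+3},g_{j+4}$ is less than $0.199n$), then $Q(v)\ge n^2/50+c_0n^2$. Since $\eps_7\ll c_0$, this contradicts the last display and finishes the proof. To prove the stability statement I would optimise the quadratic form $Q$ over the simplex $\{(r_i,g_i,b_i):r_i+g_i+b_i=|F_i|\}$; its minimum is $(1+o(1))n^2/50$, attained essentially only at the five ``pattern'' profiles (all red mass on a single class $F_i$, green on $F_{i\pm1}$, blue on $F_{i\pm2}$), which are exactly the profiles of vertices of $V_i$. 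If some $r_i\ge0.199n$, then $\sum_ir_i^2$ is forced close to $(n/5)^2$, so a second large $r_j$ is impossible and any further non-negligible $r_j$ already inflates $Q$; bounding $\sum_ig_ig_{i+1}+\sum_ib_ib_{i+2}$ from above and using that the green and blue pentagons are complementary on the four classes other than $F_i$, one pins $g_{i+1},g_{i+4},b_{i+2},b_{i+3}$ to within $o(n)$ of $|F_{i+1}|,|F_{i+4}|,|F_{i+2}|,|F_{i+3}|$, so that all five defining inequalities for $V_i$ hold and $v\in V_i$, a contradiction. If no $r_i\ge0.199n$, then because $C_5$ is not bipartite the classes cannot be arranged to make both $\sum_ig_ig_{i+1}$ and $\sum_ib_ib_{i+2}$ small without some $r_i$ close to $|F_i|$, and a short calculation shows $\sum_ig_ig_{i+1}+\sum_ib_ib_{i+2}$ exceeds a fixed positive multiple of $n^2$, again giving $Q(v)\ge n^2/50+c_0n^2$. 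Carrying out this two-case optimisation with explicit constants, while keeping track of the $\eps_i$-sized error terms from the previous paragraph, is the only genuinely computational part; the rest is bookkeeping.
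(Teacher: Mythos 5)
Your plan is essentially the paper's proof: you assume $v\in V^*$, recolour the star at $v$ to obtain the upper bound $t_v\lesssim n^2/50$, and lower bound $t_v$ by the quadratic form $\sum\binom{r_i}{2}+\sum g_ig_{i+1}+\sum b_ib_{i+2}$ (which is the paper's (\ref{lower}) up to notation and lower-order terms). The rest is the same stability optimisation, so the overall strategy matches.

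Where you stop short is exactly the step you flag as ``the main obstacle'': the lower bound for $Q(v)$ when $v\notin\bigcup_iV_i$. Your sketch of the two-case optimisation is plausible but not a proof, and in particular the ``no $r_i\ge0.199n$'' branch (``because $C_5$ is not bipartite\dots a short calculation shows'') would need to be carried out with care, since the green and blue $5$-cycles individually do admit zero-energy configurations and it is only their interaction (they partition $K_5$) that forces the quadratic form up. The paper avoids this case split altogether by a clean observation: after substituting $|F_i|=r_i+\gamma_i+\beta_i$, the minimum of the relevant quadratic form can be attained with $\gamma_i=0$ or $\beta_i=0$ for every $i$; this forces at least three of the $\gamma_i$ (or three of the $\beta_i$) to vanish, from which one reads off $t_v\ge(0.04-O(\eps_8))n^2-\tfrac12 r_{j'}^2$ for some $j'$, and hence directly that $r_{j'}\ge(0.2-\eps_9)n$ --- i.e.\ your second case is shown to be vacuous rather than handled separately. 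The paper then finishes as you describe: ruling out coexistence of a large $g_j$ and large $b_j$ pins the remaining $g$'s and $b$'s to the pattern, so $v\in V_{j'}$, contradiction. In short: same route, but your plan leaves the genuinely load-bearing optimisation as a sketch, whereas the paper supplies the reduction (``at the minimum, $\gamma_i\beta_i=0$'') that makes it tractable.
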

\begin{proof}
 Suppose that $v\in V^*$. We count the number of monochromatic triangles $t_v$ containing $v$ and two vertices from
  outside of $V^*$. 
First, if we were to recolour all edges from $v$ to the smallest $V_i$ red, from $v$ to $V_{i+1}\cup V_{i+4}$ green, and from $v$ to $V_{i+2}\cup V_{i+3}$ blue, then we would get at most 
\[
 \binom{|V_i|}{2}+|E^*| \leq \binom{\lfloor n/5 \rfloor}{2} +|E_0|< (0.02+\eps_{5})n^2
\]
monochromatic triangles containing $v$ and two vertices from outside of $V^*$, and at most $|V^*|n< \eps _6 n^2$ new triangles containing $v$ and another vertex from $V^*$. Thus, the minimality of $G$ implies that 
\begin{align}\label{up}
 t_v< (0.02+\eps_{5}+\eps_6)n^2.
\end{align}

Recall our notation $r_i(v),g_i(v),b_i(v)$. 
Note that
\begin{align}
 t_v  \ge & \ 0.5(r_1(v)^2+r_2(v) ^2+r_3(v) ^2+r_4(v)^2+r_5(v)^2) \label{lower} \\
&+g_1(v)g_2(v)+g_2(v)g_3(v)+g_3(v)g_4(v)+g_4(v)g_5(v)+g_5(v)g_1(v)\nonumber \\
&+b_1(v)b_3(v)+b_2(v)b_4(v)+b_3(v)b_5(v)+b_4(v)b_1(v)+b_5(v)b_2(v)\nonumber \\
&-2\eps_{5}n^2. \nonumber
\end{align}
where the last term occurs since $\binom{r_i(v)}{2} \geq 0.5r_i ^2-n$ for each $1 \leq i \leq 5$ and as $|E^*|< \eps _5 n^2$. 

Our next task is to find a lower bound on
\begin{align}
& 0.5(r_1(v)^2+r_2(v) ^2+r_3(v)^2+r_4(v)^2+r_5(v)^2)+
\gamma_1\gamma_2+\gamma_2\gamma_3+\gamma_3\gamma_4+\gamma_4\gamma_5+\gamma_5\gamma_1 \label{low}\\
 & +\beta_1\beta_3+\beta_2\beta_4+\beta_3\beta_5+\beta_4\beta_1+\beta_5\beta_2 \nonumber
\end{align}
 under the assumptions that $\gamma _i,\beta_i \geq 0$ are integers and
$|F_i|=r_i(v)+\gamma_i+\beta_i$  for all $1 \leq i \leq 5$.
(Note that finding a lower bound on (\ref{low}) gives us a lower bound on the right hand side of (\ref{lower}) and
thus a lower bound on the value of $t_v$.)
 Notice that there is a choice of the 
values of the $\gamma _i$ and $\beta _i$ which minimise the value of (\ref{low}) and which
satisfy $\gamma _i=0$ or $\beta _i=0$ for all $1\leq i \leq 5$. (For example,
if there is a choice of the 
values of the $\gamma _i$ and $\beta _i$ which minimise the value of (\ref{low})
but with $\gamma _1, \beta _1>0$ then this implies that $\gamma _2+\gamma _5=\beta _3+ \beta_4$. 
We can thus obtain another `minimal'  choice of the $\gamma _i $ and $\beta _i$ by resetting $\gamma _1=0$ and 
$\beta _1=|F_1|-r_1(v)$.)

Consider such a choice of the $ \gamma_i$ and $\beta _i$. So at least three of the $\gamma_i$ equal $0$ or at least three of the
$\beta_i$ equal $0$. Assume that $\beta_1=\beta_2=0$.
Thus,
\begin{align}\label{1l}
 0.5 r_1(v) ^2+0.5 r_2(v) ^2+\gamma_1\gamma_2\ge (0.02-\eps_{8})n^2
\end{align}
since $r_1(v)+\gamma_1, r_2(v)+\gamma_2 \geq (1-\eps _7)n/5$.
If $\gamma_3=\gamma_5=0$, then similarly
\[
 0.5 r_3 (v) ^2+0.5 r_5(v) ^2+\beta_3\beta_5\ge (0.02-\eps_{8})n^2.
\]
Together with (\ref{lower}) this implies that $t_v\ge (0.04-2\eps_{8}n^2-2\eps _5) n^2$, a contradiction to (\ref{up}).
So $\beta_3=0$ or $\beta_5=0$.
Assume that $\beta_3=0$. Thus, as before we have that
\begin{align}\label{2l}
 0.5 r_2(v) ^2+0.5 r_3(v) ^2+\gamma_2\gamma_3\ge (0.02-\eps_{8})n^2.
\end{align}
Hence, (\ref{1l}) and (\ref{2l}) imply that (\ref{low}) is bounded below by
$$(0.04-2\eps _8)n^2 - 0.5r_2 (v) ^2.$$

In all other cases we obtain that 
(\ref{low}) is bounded below by
$$(0.04-2\eps _8)n^2 - 0.5r_{j'} (v) ^2$$
for some $1 \leq j' \leq 5$.
In particular, together with (\ref{lower}) this implies that
$$t_v \geq (0.04-2\eps _8)n^2 - 0.5r_{j'} (v) ^2-2 \eps _5 n^2$$
for some $1 \leq j' \leq 5$. Thus, (\ref{up}) implies that $r_{j'} (v) \geq (0.2-\eps _9)n$ for some $1 \le j '\le 5$.
This in turn implies that $v$ lies in at least $\binom{(0.2-\eps _9)n}{2}-|E^*| \geq (0.02 -\eps _9)n^2$
red triangles in $G$.
Note that (\ref{up}) also implies that $r_i (v) < \eps _{10} n$ for all $i \in [5] \setminus \{j'\}$.

We may assume that $j'=1$. 
Suppose that for some $j$, $g_{j}(v)\ge 0.0001n$ and $b_{j} (v)\ge 0.0001n$. Let $\{i_1,i_2,i_3\}=[5]\setminus\{1,j\}$.
It is easy to see that this implies that there are at least
$$(0.0001n)^2-|E^*|$$
green or blue monochromatic triangles containing $v$ and vertices from $V_{j}$, $V_{i_1}$, $V_{i_2}$ and $V_{i_3}$.
Therefore, $t_v \geq (0.02-\eps _9)n^2+(0.0001n)^2-|E^*|$, a contradiction to (\ref{up}).

Thus, for every $i\in\{2,3,4,5 \}$, either $g_i(v)<0.0001n$ or $b_i(v)<0.0001n$. 
If $b_2(v) \geq 0.0001n$ then it is easy to see that $b_4 (v), b_5 (v) < 0.0001n$ (else we get $(0.0001n)^2-|E^*|$ blue triangles
containing $v$, a contradiction). So $g_4 (v), g_5 (v) \geq 0.19n$. This implies that there are at least
$(0.19n)^2-|E^*|$ green triangles containing $v$, a contradiction. Thus, $b_2(v) < 0.0001n$. Similar
arguments imply that $g_3(v), b_4(v), g_5 (v) <0.0001n$. This implies that $v \in V_1$, a contradiction. So indeed
 $V^* =\emptyset$, as desired.
\end{proof}
By Claims~\ref{c:Vred} and ~\ref{c:V*}, $V(G)$ can be partitioned into $5$ monochromatic cliques of the same colour.
A straightforward calculation yields that
the graphs in $\mathcal G_n$ are precisely
those $3$-coloured complete graphs on $n$ vertices
that minimise the number of monochromatic triangles
among all $3$-coloured complete graphs
whose vertex set can be partitioned into $5$ monochromatic cliques of the same colour.
Thus, $G\in \mathcal G_n$ as desired.

\section*{Acknowledgements}

  Thanks to Tom Bohman, Po-Shen Loh, John Mackey, Ryan Martin, Dhruv Mubayi, and Oleg Pikhurko for their interest
   and encouragement.   The computational part was done using  the Maple\footnote{Maple is a trademark of Waterloo Maple Incorporated.}
   computer algebra system \cite{Maple} and the freely available CSDP \cite{csdp}
  and SDPA \cite{sdpa} semidefinite programming solvers.
  In particular the robustness of CSDP and the availability of very high precision versions of SDPA were critical.

\section*{Appendix}

In this appendix we give the data for the proof of Proposition \ref{prop:flag}.

    We will describe the types,models and  flags which we  use by ``adjacency matrices''. Our convention is that
    the numbers $1$, $2$ and $3$ correspond to the colours red, blue and green respectively. If $\sigma$
    is a type of size $k$ then $\sigma$ is described by a symmetric $k \times k$ matrix in which
    the $(i,j)$ entry
    is the number corresponding to the colour of the edge $ij$ for $i \neq j$, and is zero for $i = j$.
    Similarly if $M$ is a model with $v(M) = n$ then we enumerate the vertices as $v_1, \ldots v_n$, and
    describe $M$ by an $n \times n$ matrix in which  the $(i,j)$ entry
    is the number corresponding to the colour of the edge $v_i v_j$ for $i \neq j$, and is zero for $i = j$.
    
    When $\sigma$ is a type of size $k$ and $F = (M, \theta)$ is a $\sigma$-flag then we can  enumerate
   the vertices of $M$ so that $\theta(i) = v_i$ for $1 \le i \le k$. It follows that the matrix of $M$
   contains the matrix of $\sigma$ in the first $k$ many rows and columns.   In particular when $k = 3$ and
   $n =4$, which is the only case of interest for us here, we may completely describe the $\sigma$-flag $F$ by
   specifying $\sigma$ and  a row vector $v$ of length $3$ containing the $(4,1)$, $(4,2)$ and $(4,3)$ entries 
   of $M$; we will denote the corresponding $\sigma$-flag as ``$v_\sigma$''.

   There are ten types of size $3$ up to isomorphism, all of which are used. For each type $\sigma_i$ we list
   the  $27$ $\sigma_i$-flags on $4$ vertices as $F^i_j$. We then list the ten matrices $Q^i$. 

\tiny

\[
 \sigma_1=\left[ \begin {array}{ccc} 0&1&1\\ \noalign{\medskip}1&0&1
\\ \noalign{\medskip}1&1&0\end {array} \right], \sigma_2= \left[ \begin {array}
{ccc} 0&1&1\\ \noalign{\medskip}1&0&2\\ \noalign{\medskip}1&2&0
\end {array} \right],  \sigma_3=\left[ \begin {array}{ccc} 0&1&1
\\ \noalign{\medskip}1&0&3\\ \noalign{\medskip}1&3&0\end {array}
 \right],  \sigma_4=\left[ \begin {array}{ccc} 0&1&2\\ \noalign{\medskip}1&0&2
\\ \noalign{\medskip}2&2&0\end {array} \right],
\]

\[
  \sigma_5= \left[ \begin {array}
{ccc} 0&1&2\\ \noalign{\medskip}1&0&3\\ \noalign{\medskip}2&3&0
\end {array} \right],   \sigma_6= \left[ \begin {array}{ccc} 0&1&3
\\ \noalign{\medskip}1&0&3\\ \noalign{\medskip}3&3&0\end {array}
 \right],  \sigma_7= \left[ \begin {array}{ccc} 0&2&2\\ \noalign{\medskip}2&0&2
\\ \noalign{\medskip}2&2&0\end {array} \right], \sigma_8= \left[ \begin {array}
{ccc} 0&2&2\\ \noalign{\medskip}2&0&3\\ \noalign{\medskip}2&3&0
\end {array} \right],
\]

\[
  \sigma_9= \left[ \begin {array}{ccc} 0&2&3
\\ \noalign{\medskip}2&0&3\\ \noalign{\medskip}3&3&0\end {array}
 \right],  \sigma_{10}=\left[ \begin {array}{ccc} 0&3&3\\ \noalign{\medskip}3&0&3
\\ \noalign{\medskip}3&3&0\end {array} \right]. 
\]

\[
F^{1}_{1}=[1, 1, 1]_{\sigma_1}, F^{1}_{2}=[1, 1, 2]_{\sigma_1}, F^{1}_{3}=[1, 2, 1]_{\sigma_1}, F^{1}_{4}=[2, 1, 1]_{\sigma_1}, F^{1}_{5}=[1, 1, 3]_{\sigma_1}, F^{1}_{6}=[1, 3, 1]_{\sigma_1}
\]
\[
F^{1}_{7}=[3, 1, 1]_{\sigma_1}, F^{1}_{8}=[1, 2, 2]_{\sigma_1}, F^{1}_{9}=[2, 1, 2]_{\sigma_1}, F^{1}_{10}=[2, 2, 1]_{\sigma_1}, F^{1}_{11}=[1, 2, 3]_{\sigma_1}, F^{1}_{12}=[1, 3, 2]_{\sigma_1}
\]
\[
F^{1}_{13}=[2, 1, 3]_{\sigma_1}, F^{1}_{14}=[2, 3, 1]_{\sigma_1}, F^{1}_{15}=[3, 1, 2]_{\sigma_1}, F^{1}_{16}=[3, 2, 1]_{\sigma_1}, F^{1}_{17}=[1, 3, 3]_{\sigma_1}, F^{1}_{18}=[3, 1, 3]_{\sigma_1}
\]
\[
F^{1}_{19}=[3, 3, 1]_{\sigma_1}, F^{1}_{20}=[2, 2, 2]_{\sigma_1}, F^{1}_{21}=[2, 2, 3]_{\sigma_1}, F^{1}_{22}=[2, 3, 2]_{\sigma_1}, F^{1}_{23}=[3, 2, 2]_{\sigma_1}, F^{1}_{24}=[2, 3, 3]_{\sigma_1}
\]
\[
F^{1}_{25}=[3, 2, 3]_{\sigma_1}, F^{1}_{26}=[3, 3, 2]_{\sigma_1}, F^{1}_{27}=[3, 3, 3]_{\sigma_1}, F^{2}_{1}=[1, 1, 1]_{\sigma_2}, F^{2}_{2}=[1, 1, 2]_{\sigma_2}, F^{2}_{3}=[1, 2, 1]_{\sigma_2}
\]
\[
F^{2}_{4}=[1, 1, 3]_{\sigma_2}, F^{2}_{5}=[1, 3, 1]_{\sigma_2}, F^{2}_{6}=[1, 2, 2]_{\sigma_2}, F^{2}_{7}=[1, 2, 3]_{\sigma_2}, F^{2}_{8}=[1, 3, 2]_{\sigma_2}, F^{2}_{9}=[1, 3, 3]_{\sigma_2}
\]
\[
F^{2}_{10}=[2, 1, 1]_{\sigma_2}, F^{2}_{11}=[2, 1, 2]_{\sigma_2}, F^{2}_{12}=[2, 2, 1]_{\sigma_2}, F^{2}_{13}=[2, 1, 3]_{\sigma_2}, F^{2}_{14}=[2, 3, 1]_{\sigma_2}, F^{2}_{15}=[2, 2, 2]_{\sigma_2}
\]
\[
F^{2}_{16}=[2, 2, 3]_{\sigma_2}, F^{2}_{17}=[2, 3, 2]_{\sigma_2}, F^{2}_{18}=[2, 3, 3]_{\sigma_2}, F^{2}_{19}=[3, 1, 1]_{\sigma_2}, F^{2}_{20}=[3, 1, 2]_{\sigma_2}, F^{2}_{21}=[3, 2, 1]_{\sigma_2}
\]
\[
F^{2}_{22}=[3, 1, 3]_{\sigma_2}, F^{2}_{23}=[3, 3, 1]_{\sigma_2}, F^{2}_{24}=[3, 2, 2]_{\sigma_2}, F^{2}_{25}=[3, 2, 3]_{\sigma_2}, F^{2}_{26}=[3, 3, 2]_{\sigma_2}, F^{2}_{27}=[3, 3, 3]_{\sigma_2}
\]
\[
F^{3}_{1}=[1, 1, 1]_{\sigma_3}, F^{3}_{2}=[1, 1, 2]_{\sigma_3}, F^{3}_{3}=[1, 2, 1]_{\sigma_3}, F^{3}_{4}=[1, 1, 3]_{\sigma_3}, F^{3}_{5}=[1, 3, 1]_{\sigma_3}, F^{3}_{6}=[1, 2, 2]_{\sigma_3}
\]
\[
F^{3}_{7}=[1, 2, 3]_{\sigma_3}, F^{3}_{8}=[1, 3, 2]_{\sigma_3}, F^{3}_{9}=[1, 3, 3]_{\sigma_3}, F^{3}_{10}=[2, 1, 1]_{\sigma_3}, F^{3}_{11}=[2, 1, 2]_{\sigma_3}, F^{3}_{12}=[2, 2, 1]_{\sigma_3}
\]
\[
F^{3}_{13}=[2, 1, 3]_{\sigma_3}, F^{3}_{14}=[2, 3, 1]_{\sigma_3}, F^{3}_{15}=[2, 2, 2]_{\sigma_3}, F^{3}_{16}=[2, 2, 3]_{\sigma_3}, F^{3}_{17}=[2, 3, 2]_{\sigma_3}, F^{3}_{18}=[2, 3, 3]_{\sigma_3}
\]
\[
F^{3}_{19}=[3, 1, 1]_{\sigma_3}, F^{3}_{20}=[3, 1, 2]_{\sigma_3}, F^{3}_{21}=[3, 2, 1]_{\sigma_3}, F^{3}_{22}=[3, 1, 3]_{\sigma_3}, F^{3}_{23}=[3, 3, 1]_{\sigma_3}, F^{3}_{24}=[3, 2, 2]_{\sigma_3}
\]
\[
F^{3}_{25}=[3, 2, 3]_{\sigma_3}, F^{3}_{26}=[3, 3, 2]_{\sigma_3}, F^{3}_{27}=[3, 3, 3]_{\sigma_3}, F^{4}_{1}=[1, 1, 1]_{\sigma_4}, F^{4}_{2}=[1, 1, 2]_{\sigma_4}, F^{4}_{3}=[1, 1, 3]_{\sigma_4}
\]
\[
F^{4}_{4}=[1, 2, 1]_{\sigma_4}, F^{4}_{5}=[2, 1, 1]_{\sigma_4}, F^{4}_{6}=[1, 2, 2]_{\sigma_4}, F^{4}_{7}=[2, 1, 2]_{\sigma_4}, F^{4}_{8}=[1, 2, 3]_{\sigma_4}, F^{4}_{9}=[2, 1, 3]_{\sigma_4}
\]
\[
F^{4}_{10}=[1, 3, 1]_{\sigma_4}, F^{4}_{11}=[3, 1, 1]_{\sigma_4}, F^{4}_{12}=[1, 3, 2]_{\sigma_4}, F^{4}_{13}=[3, 1, 2]_{\sigma_4}, F^{4}_{14}=[1, 3, 3]_{\sigma_4}, F^{4}_{15}=[3, 1, 3]_{\sigma_4}
\]
\[
F^{4}_{16}=[2, 2, 1]_{\sigma_4}, F^{4}_{17}=[2, 2, 2]_{\sigma_4}, F^{4}_{18}=[2, 2, 3]_{\sigma_4}, F^{4}_{19}=[2, 3, 1]_{\sigma_4}, F^{4}_{20}=[3, 2, 1]_{\sigma_4}, F^{4}_{21}=[2, 3, 2]_{\sigma_4}
\]
\[
F^{4}_{22}=[3, 2, 2]_{\sigma_4}, F^{4}_{23}=[2, 3, 3]_{\sigma_4}, F^{4}_{24}=[3, 2, 3]_{\sigma_4}, F^{4}_{25}=[3, 3, 1]_{\sigma_4}, F^{4}_{26}=[3, 3, 2]_{\sigma_4}, F^{4}_{27}=[3, 3, 3]_{\sigma_4}
\]
\[
F^{5}_{1}=[1, 1, 1]_{\sigma_5}, F^{5}_{2}=[1, 1, 2]_{\sigma_5}, F^{5}_{3}=[1, 1, 3]_{\sigma_5}, F^{5}_{4}=[1, 2, 1]_{\sigma_5}, F^{5}_{5}=[1, 2, 2]_{\sigma_5},  F^{5}_{6}=[1, 2, 3]_{\sigma_5}
\]
\[
F^{5}_{7}=[2, 1, 1]_{\sigma_5}, F^{5}_{8}=[1, 3, 1]_{\sigma_5}, F^{5}_{9}=[1, 3, 2]_{\sigma_5}, F^{5}_{10}=[1, 3, 3]_{\sigma_5}, F^{5}_{11}=[2, 1, 2]_{\sigma_5}, F^{5}_{12}=[2, 1, 3]_{\sigma_5}
\]
\[
F^{5}_{13}=[3, 1, 1]_{\sigma_5}, F^{5}_{14}=[3, 1, 2]_{\sigma_5}, F^{5}_{15}=[3, 1, 3]_{\sigma_5}, F^{5}_{16}=[2, 2, 1]_{\sigma_5}, F^{5}_{17}=[2, 2, 2]_{\sigma_5}, F^{5}_{18}=[2, 2, 3]_{\sigma_5}
\]
\[
F^{5}_{19}=[2, 3, 1]_{\sigma_5}, F^{5}_{20}=[2, 3, 2]_{\sigma_5}, F^{5}_{21}=[2, 3, 3]_{\sigma_5}, F^{5}_{22}=[3, 2, 1]_{\sigma_5}, F^{5}_{23}=[3, 2, 2]_{\sigma_5}, F^{5}_{24}=[3, 2, 3]_{\sigma_5}
\]
\[
F^{5}_{25}=[3, 3, 1]_{\sigma_5}, F^{5}_{26}=[3, 3, 2]_{\sigma_5}, F^{5}_{27}=[3, 3, 3]_{\sigma_5}, F^{6}_{1}=[1, 1, 1]_{\sigma_6}, F^{6}_{2}=[1, 1, 2]_{\sigma_6}, F^{6}_{3}=[1, 2, 1]_{\sigma_6}
\]
\[
F^{6}_{4}=[2, 1, 1]_{\sigma_6}, F^{6}_{5}=[1, 1, 3]_{\sigma_6}, F^{6}_{6}=[1, 2, 2]_{\sigma_6}, F^{6}_{7}=[2, 1, 2]_{\sigma_6}, F^{6}_{8}=[1, 2, 3]_{\sigma_6}, F^{6}_{9}=[2, 1, 3]_{\sigma_6}
\]
\[
F^{6}_{10}=[1, 3, 1]_{\sigma_6}, F^{6}_{11}=[3, 1, 1]_{\sigma_6}, F^{6}_{12}=[1, 3, 2]_{\sigma_6}, F^{6}_{13}=[3, 1, 2]_{\sigma_6}, F^{6}_{14}=[1, 3, 3]_{\sigma_6}, F^{6}_{15}=[3, 1, 3]_{\sigma_6}
\]
\[
F^{6}_{16}=[2, 2, 1]_{\sigma_6}, F^{6}_{17}=[2, 2, 2]_{\sigma_6}, F^{6}_{18}=[2, 2, 3]_{\sigma_6}, F^{6}_{19}=[2, 3, 1]_{\sigma_6}, F^{6}_{20}=[3, 2, 1]_{\sigma_6}, F^{6}_{21}=[2, 3, 2]_{\sigma_6}
\]
\[
F^{6}_{22}=[3, 2, 2]_{\sigma_6}, F^{6}_{23}=[2, 3, 3]_{\sigma_6}, F^{6}_{24}=[3, 2, 3]_{\sigma_6}, F^{6}_{25}=[3, 3, 1]_{\sigma_6}, F^{6}_{26}=[3, 3, 2]_{\sigma_6}, F^{6}_{27}=[3, 3, 3]_{\sigma_6}
\]
\[
F^{7}_{1}=[1, 1, 1]_{\sigma_7}, F^{7}_{2}=[1, 1, 2]_{\sigma_7}, F^{7}_{3}=[1, 2, 1]_{\sigma_7}, F^{7}_{4}=[2, 1, 1]_{\sigma_7}, F^{7}_{5}=[1, 1, 3]_{\sigma_7}, F^{7}_{6}=[1, 3, 1]_{\sigma_7}
\]
\[
F^{7}_{7}=[3, 1, 1]_{\sigma_7}, F^{7}_{8}=[1, 2, 2]_{\sigma_7}, F^{7}_{9}=[2, 1, 2]_{\sigma_7}, F^{7}_{10}=[2, 2, 1]_{\sigma_7}, F^{7}_{11}=[1, 2, 3]_{\sigma_7}, F^{7}_{12}=[1, 3, 2]_{\sigma_7}
\]
\[
F^{7}_{13}=[2, 1, 3]_{\sigma_7}, F^{7}_{14}=[2, 3, 1]_{\sigma_7}, F^{7}_{15}=[3, 1, 2]_{\sigma_7}, F^{7}_{16}=[3, 2, 1]_{\sigma_7}, F^{7}_{17}=[1, 3, 3]_{\sigma_7}, F^{7}_{18}=[3, 1, 3]_{\sigma_7}
\]
\[
F^{7}_{19}=[3, 3, 1]_{\sigma_7}, F^{7}_{20}=[2, 2, 2]_{\sigma_7}, F^{7}_{21}=[2, 2, 3]_{\sigma_7}, F^{7}_{22}=[2, 3, 2]_{\sigma_7}, F^{7}_{23}=[3, 2, 2]_{\sigma_7}, F^{7}_{24}=[2, 3, 3]_{\sigma_7}
\]
\[
F^{7}_{25}=[3, 2, 3]_{\sigma_7}, F^{7}_{26}=[3, 3, 2]_{\sigma_7}, F^{7}_{27}=[3, 3, 3]_{\sigma_7}, F^{8}_{1}=[1, 1, 1]_{\sigma_8}, F^{8}_{2}=[1, 1, 2]_{\sigma_8}, F^{8}_{3}=[1, 2, 1]_{\sigma_8}
\]
\[
F^{8}_{4}=[2, 1, 1]_{\sigma_8}, F^{8}_{5}=[1, 1, 3]_{\sigma_8}, F^{8}_{6}=[1, 3, 1]_{\sigma_8}, F^{8}_{7}=[3, 1, 1]_{\sigma_8}, F^{8}_{8}=[1, 2, 2]_{\sigma_8}, F^{8}_{9}=[2, 1, 2]_{\sigma_8}
\]
\[
F^{8}_{10}=[2, 2, 1]_{\sigma_8}, F^{8}_{11}=[1, 2, 3]_{\sigma_8}, F^{8}_{12}=[1, 3, 2]_{\sigma_8}, F^{8}_{13}=[1, 3, 3]_{\sigma_8}, F^{8}_{14}=[2, 1, 3]_{\sigma_8}, F^{8}_{15}=[2, 3, 1]_{\sigma_8}
\]
\[
F^{8}_{16}=[3, 1, 2]_{\sigma_8}, F^{8}_{17}=[3, 2, 1]_{\sigma_8}, F^{8}_{18}=[3, 1, 3]_{\sigma_8}, F^{8}_{19}=[3, 3, 1]_{\sigma_8}, F^{8}_{20}=[2, 2, 2]_{\sigma_8}, F^{8}_{21}=[2, 2, 3]_{\sigma_8}
\]
\[
F^{8}_{22}=[2, 3, 2]_{\sigma_8}, F^{8}_{23}=[2, 3, 3]_{\sigma_8}, F^{8}_{24}=[3, 2, 2]_{\sigma_8}, F^{8}_{25}=[3, 2, 3]_{\sigma_8}, F^{8}_{26}=[3, 3, 2]_{\sigma_8}, F^{8}_{27}=[3, 3, 3]_{\sigma_8}
\]
\[
F^{9}_{1}=[1, 1, 1]_{\sigma_9}, F^{9}_{2}=[1, 1, 2]_{\sigma_9}, F^{9}_{3}=[1, 2, 1]_{\sigma_9}, F^{9}_{4}=[2, 1, 1]_{\sigma_9}, F^{9}_{5}=[1, 1, 3]_{\sigma_9}, F^{9}_{6}=[1, 3, 1]_{\sigma_9}
\]
\[
F^{9}_{7}=[3, 1, 1]_{\sigma_9}, F^{9}_{8}=[1, 2, 2]_{\sigma_9}, F^{9}_{9}=[2, 1, 2]_{\sigma_9}, F^{9}_{10}=[2, 2, 1]_{\sigma_9}, F^{9}_{11}=[1, 2, 3]_{\sigma_9}, F^{9}_{12}=[2, 1, 3]_{\sigma_9}
\]
\[
F^{9}_{13}=[1, 3, 2]_{\sigma_9}, F^{9}_{14}=[3, 1, 2]_{\sigma_9}, F^{9}_{15}=[2, 3, 1]_{\sigma_9}, F^{9}_{16}=[3, 2, 1]_{\sigma_9}, F^{9}_{17}=[1, 3, 3]_{\sigma_9}, F^{9}_{18}=[3, 1, 3]_{\sigma_9}
\]
\[
F^{9}_{19}=[3, 3, 1]_{\sigma_9}, F^{9}_{20}=[2, 2, 2]_{\sigma_9}, F^{9}_{21}=[2, 2, 3]_{\sigma_9}, F^{9}_{22}=[2, 3, 2]_{\sigma_9}, F^{9}_{23}=[3, 2, 2]_{\sigma_9}, F^{9}_{24}=[2, 3, 3]_{\sigma_9}
\]
\[
F^{9}_{25}=[3, 2, 3]_{\sigma_9}, F^{9}_{26}=[3, 3, 2]_{\sigma_9}, F^{9}_{27}=[3, 3, 3]_{\sigma_9}, F^{10}_{1}=[1, 1, 1]_{\sigma_{10}}, F^{10}_{2}=[1, 1, 2]_{\sigma_{10}}, F^{10}_{3}=[1, 2, 1]_{\sigma_{10}}
\]
\[
F^{10}_{4}=[2, 1, 1]_{\sigma_{10}}, F^{10}_{5}=[1, 1, 3]_{\sigma_{10}}, F^{10}_{6}=[1, 3, 1]_{\sigma_{10}}, F^{10}_{7}=[3, 1, 1]_{\sigma_{10}}, F^{10}_{8}=[1, 2, 2]_{\sigma_{10}}, F^{10}_{9}=[2, 1, 2]_{\sigma_{10}}
\]
\[
  F^{10}_{10}=[2, 2, 1]_{\sigma_{10}}, F^{10}_{11}=[1, 2, 3]_{\sigma_{10}}, F^{10}_{12}=[1, 3, 2]_{\sigma_{10}}, F^{10}_{13}=[2, 1, 3]_{\sigma_{10}}, F^{10}_{14}=[2, 3, 1]_{\sigma_{10}}, F^{10}_{15}=[3, 1, 2]_{\sigma_{10}}
\]
\[
F^{10}_{16}=[3, 2, 1]_{\sigma_{10}}, F^{10}_{17}=[1, 3, 3]_{\sigma_{10}}, F^{10}_{18}=[3, 1, 3]_{\sigma_{10}}, F^{10}_{19}=[3, 3, 1]_{\sigma_{10}}, F^{10}_{20}=[2, 2, 2]_{\sigma_{10}}, F^{10}_{21}=[2, 2, 3]_{\sigma_{10}}
\]
\[
F^{10}_{22}=[2, 3, 2]_{\sigma_{10}}, F^{10}_{23}=[3, 2, 2]_{\sigma_{10}}, F^{10}_{24}=[2, 3, 3]_{\sigma_{10}}, F^{10}_{25}=[3, 2, 3]_{\sigma_{10}}, F^{10}_{26}=[3, 3, 2]_{\sigma_{10}}, F^{10}_{27}=[3, 3, 3]_{\sigma_{10}}
\]

\newcommand\scalemath[2]{\scalebox{#1}{\mbox{\ensuremath{\displaystyle #2}}}}

\[\scalemath{0.28}{Q^1=\left(
 \right)}
\]


\begin{thebibliography}{10}
\bibitem{csdp}
B.~Borchers.  CSDP, A C Library for Semidefinite Programming. {\em Optimization Methods and Software} 11(1):613-623, 1999.
\bibitem{burr} S.A. Burr and V. Rosta, On the Ramsey multiplicity of graphs - problems and recent results,
\emph{J. Graph Theory} {\bf 4} (1980), 347--361.
\bibitem{conlon} D. Conlon, On the Ramsey multiplicity of complete graphs, \emph{Combinatorica}, to appear.
\bibitem{CuYo} J.~Cummings and M.~Young, Graphs containing triangles are not $3$-common, 
\emph{J.  Combin.}, {\bf 2} (2011), 1--14.
\bibitem{E} P.~Erd{\H{o}}s, On the number of complete subgraphs contained in certain graphs,
\emph{ Magyar Tud. Akad. Mat. Kutat\'o Int. K\"ozl.} {\bf 7} (1962), 459--464.
\bibitem{fox} J. Fox, There exist graphs with super-exponential Ramsey multiplicity constant,
\emph{J. Graph Theory} {\bf 57} (2008), 89--98.
\bibitem{giraud} G. Giraud, Sur les proportions respectives de triangles uni, bi- ou tricoloures dans un tricolouriage
des ar\^etes du $n$-emble, \emph{Discrete Math.} {\bf 16} (1976),
13--38.
\bibitem{good} A.W. Goodman, On sets of acquaintances and strangers at any party, \emph{Amer. Math. Monthly}~{\bf 66}
(1959), 778--783.
\bibitem{goodDM} A.W. Goodman, Triangles in a complete chromatic graph with three colours, \emph{Discrete Math.} {\bf 57} (1985),
225--235.
\bibitem{green} R.E. Greenwood and A.M. Gleason, Combinatorial relations and chromatic graphs,
\emph{Canadian J. Math.} {\bf 7} (1955), 1--7.
\bibitem{non} H. Hatami, J. Hladk\'y, D. Kr\'al', S. Norine and A. Razborov,
Non-three-colorable common graphs exist, \emph{Combin. Prob. Comput.,} in press. 
\bibitem{JST}
C.~Jagger, P.~{\v{S}}{\v{t}}ov{\'{\i}}{\v{c}}ek, and A.~Thomason,
Multiplicities of subgraphs, \emph{Combinatorica}, {\bf 16} (1996), 123--141.
\bibitem{Janson&Luczak&Rucinski00} S. Janson, T. {\L}uczak\ and\ A. Ruci\'nski,
{\em Random Graphs}, Wiley, 2000.
\bibitem{Maple}
Maple 14.  Maplesoft, a division of Waterloo Maple Incorporated, Waterloo, Ontario. 
\bibitem{pikcom} O. Pikhurko, private communication, 2012.
\bibitem{ramsey} F.P. Ramsey, On a problem of formal logic, \emph{Proc. London Math. Soc}~{\bf 30} (1930), 264--286.
\bibitem{FA} A.~Razborov, Flag algebras, \emph{J. Symbol. Logic} {\bf 72} (2007), 1239--1282.
\bibitem{RazTuran} A.~Razborov,  On $3$-hypergraphs with forbidden $4$-vertex configurations,
 \emph{SIAM J. Discrete Math.}, {\bf 24} (2010) 946--963. 
\bibitem{sane} S.S. Sane and W.D. Wallis, Monochromatic triangles in three colours, \emph{Bull. Austral. Math. Soc.} {\bf 37} (1988),
197--212.
\bibitem{oeis} Sequence A004102. The On-Line Encyclopedia of Integer Sequences, http://oeis.org/A004102, 2012. 
\bibitem{AT} A.~Thomason, A disproof of a conjecture of {E}rd{\H o}s in {R}amsey theory.
\emph{J. London Math. Soc.} {\bf 39} (1989), 246--255.
\bibitem{Th} A.~Thomason, Graph products and monochromatic multiplicities,
\emph{Combinatorica}, {\bf 17} (1997), 125--134.
\bibitem{wallis} W.D. Wallis, The number of monochromatic triangles in edge-colourings of a complete graph,
\emph{J. Combin. Inform. System. Sci.} {\bf 1} (1976), 17--20.
\bibitem{sdpa}
M.~Yamashita, K.~Fujisawa, M.~Fukuda, K.~Nakata and M.~Nakata.
 A high-performance software package for semidefinite programs: SDPA 7.
 Research Report B-463, Department of Mathematical and Computing Science, Tokyo Institute of Technology, Tokyo, Japan,  2010. 
\end{thebibliography}
\end{document}